\documentclass[11pt,a4paper,reqno]{amsart}
\usepackage{amssymb,tikz,tikz-3dplot}

\usepackage[hidelinks]{hyperref}
\usepackage[english]{babel}
\usepackage[top=3cm,right=3cm,left=3cm,bottom=3cm]{geometry}
\usetikzlibrary{arrows.meta}

\usepackage{float,wrapfig}
\restylefloat{figure}
\usepackage[labelfont=sc]{caption}

\linespread{1.2}
\allowdisplaybreaks[4]
\pagestyle{headings}

\makeatletter
\@namedef{subjclassname@2020}{%
  \textup{2020} Mathematics Subject Classification}
\makeatother

\newtheorem{thm}{Theorem}
\newtheorem{lemma}[thm]{Lemma}
\newtheorem{prop}[thm]{Proposition}
\newtheorem{rem}[thm]{Remark}
\newtheorem{cor}[thm]{Corollary}
\newtheorem{df}[thm]{Definition}

\newtheorem{notations}[thm]{Notations}

\DeclareMathOperator{\End}{End(E)}
\DeclareMathOperator{\Ends}{End}
\DeclareMathOperator{\rk}{rk}
\DeclareMathOperator{\spec}{Spec}

\DeclareMathOperator{\Ker}{Ker}

\newcommand{\sgp}{S_{\Gi}^{\perp}}

\newcommand{\sgpe}{S_{\Gi (\ec)}^{\perp}}

\newcommand{\G}{\mathbb{G}}
\newcommand{\Gi}{\mathbb{G}_i}
\newcommand{\R}{\mathbb{R}}
\newcommand{\C}{\mathbb{C}}
\newcommand{\CP}{\mathbb{C}\mathrm{P}}

\renewcommand{\emptyset}{\varnothing}

\newcommand{\tr}{\mathrm{Tr}}
\newcommand{\inner}[1]{\left<#1\right>}
\newcommand{\hp}{\mathbb{HP}}
\newcommand{\hpu}{\mathbb{HP}_1}
\newcommand{\hpd}{\mathbb{HPD}}
\newcommand{\hpdu}{\mathbb{HPD}_1}
\newcommand{\Hb}{\mathbb{H}}
\newcommand{\Sc}{\ensuremath{\mathcal{S}}}
\newcommand{\ec}{\ensuremath{\mathcal{E}}}
\newcommand{\cc}{\ensuremath{\mathcal{C}}}
\newcommand{\pc}{\ensuremath{\mathcal{P}}}

\title{On the pseudo-manifold of quantum states}
\author[F.~D'Andrea and D.~Franco]{Francesco D'Andrea and Davide Franco}
\date{02/11/2020}

\address[F.~D'Andrea]{Dipartimento di Matematica e Applicazioni,
Universit\`a di Napoli ``Federico II'' and I.N.F.N. Sezione di Napoli, Complesso MSA, Via Cintia, 80126 Napoli, Italy}
\email{francesco.dandrea@unina.it}

\address[D.~Franco]{Dipartimento di Matematica e Applicazioni,
Universit\`a di Napoli ``Federico II'', Complesso MSA, Via Cintia, 80126 Napoli, Italy}
\email{davide.franco@unina.it}

\subjclass[2020]{%
46L87; % Noncommutative differential geometry
81R60; % Noncommutative geometry in quantum theory
57N80; % Stratifications in topological manifolds
81P16.} % Quantum state spaces, operational and probabilistic concepts

\keywords{State space, Whitney stratification, pseudo-manifold.}

\begin{document}
\setlength{\leftmargini}{2.5em} %itemize/quote left margin

\begin{abstract}
There are various statements in the physics literature about the stratification of quantum states, for example into orbits of a unitary group, and about generalized differentiable structures on it. Our aim is to clarify and make precise some of these statements. For $A$ an arbitrary finite-dimensional C*-algebra and $U(A)$ the group of unitary elements of $A$, we observe that the partition of the state space $\mathcal{S}(A)$ into $U(A)$ orbits is not a decomposition and that the decomposition 
into orbit types is not a stratification (its pieces are not manifolds without boundary), while there is a natural Whitney stratification into matrices of fixed rank. For the latter, when $A$ is a full matrix algebra, we give an explicit description of the pseudo-manifold structure (the conical neighbourhood around any point). We then make some comments about the infinite-dimensional case.
\end{abstract}

\maketitle

\vspace*{-5mm}

\section{Introduction}
While classical mechanics can be formulated in terms of time evolution of physical systems living in a manifold and of functions defined on such a manifold, an algebraic formulation that works both in classical and quantum mechanics consists in studying the time evolution of states of a C*-algebra (whose self-adjoint elements we may interpret as the observables of our system). The pure state space of a C*-algebra replaces the phase space of classical mechanics, and 
more generally the state space replaces the space of probability distributions on a manifold, that is the starting point of statistical mechanics.
An interesting question is to investigate whether the state space $\mathcal{S}(A)$ of a C*-algebra $A$ admits a differentiable structure, in some generalized sense.

A common statement that is found in physics literature is that, when $A=M_n(\C)$ is the algebra of all $n\times n$ complex matrices, the set of all density matrices form a stratified space with strata given by $U(n)$ orbits (see e.g.~\cite{BZ} or \cite{CCIMV}). When $A$ is the algebra of all bounded operators on a separable Hilbert space one has a similar decomposition, with orbits that turn out to be smooth Banach manifolds \cite{CIJM}.
Another statement that can be found e.g.~in \cite{GKM} is that density matrices are stratified by rank. Indeed, it is observed already in \cite{DR92} that positive matrices --- what they call ``non-normalized density matrices'' --- are stratified by rank.
It is clear from these statements that the word ``stratification'' is being used with several different meanings, since for example the one by rank is a stratification in the mathematical sense of the word (cf.~Section \ref{sec:revB}), while the partition into orbits is clearly not locally finite.

The aim of this paper is to show that the state space of an arbitrary finite-dimensional C*-algebra is indeed a ``good'' singular space: it is a stratified space in the sense of Whitney whose structure can be described rather explicitly. The bulk of the paper is the explicit description of the conical neighbourhood around any point.
The main reason for focusing on stratified spaces in the sense of Whitney is that it is the class of singular spaces that are as close as possible to smooth manifolds. When working with such spaces, many issues can be  be addressed by working out ``one stratum at a time'', reducing many problems to questions about smooth manifolds. In addition, one has a cohomological theory and a Morse theory with properties very similar to those for differentiable manifolds \cite{GMP}.

\smallskip

Since we believe that the topic is of major interest in physics, we start with a short review of some background material, that we collect here to make the paper accessible to a more general audience (as well as to fix the notations). 
In Section \ref{sec:revA} we recall some properties of state spaces, while in Section \ref{sec:revB} we recall the notions of decomposed space, stratified space (in the sense of Whitney) and pseudo-manifold. These sections can be readily skipped by the experts of the field.
In Section \ref{sec:prel} we present some basic lemmas about the topology of state spaces: in particular, for a direct sum $A=A_1\oplus A_2$ of C*-algebras we explain how a stratification of $\mathcal{S}(A_1)$ and $\mathcal{S}(A_2)$ induces one of $\mathcal{S}(A)$. In Section \ref{sec:orbits} we discuss the decomposition of states into orbit types and observe, through simple examples, that is not a stratification.
Section \ref{sec:MnC} is devoted to the study of the local structure of the state space of a full matrix algebra.
Section \ref{sec:infdim} contains a short discussion about the infinite-dimensional case.

\subsection{States of C*-algebras}\label{sec:revA}
Here by a C*-algebra we shall always mean a complex one. We focus on \emph{unital} C*-algebras, the algebraic counterpart of \emph{compact} Hausdorff spaces.

Let $A$ be a unital C*-algebra. The dual space $A^*$ of bounded linear functionals \mbox{$f:A\to\C$} is a Banach space with norm
$$
\|f\|:=\sup_{a\in A\smallsetminus\{0\}}\frac{|f(a)|}{\|a\|} \;.
$$
The unit ball $B:=\{\varphi\in A^*:\|\varphi\|=1\}$ is a convex subset of $A^*$, and by Banach-Alaoglu theorem it is
compact in the weak-* topology: this is the topology we will consider on $A^*$ unless stated otherwise.
Both the norm and weak-* topology are Hausdorff, and they coincide if and only if $A$ is finite-dimensional (see e.g.~\cite[Cap.~5]{Kes}).

A functional $\varphi\in A^*$ is called positive if $\varphi(a^*a)\geq 0\;\forall\;a\in A$, and a necessary and sufficient condition is $\varphi(1)=\|\varphi\|$ \cite[Cor.~3.3.4]{Mur}. The \emph{state space} $\mathcal{S}(A)$, i.e.~the set of positive linear functionals with norm $1$, is then the intersection of the unit ball $B$ with the hyperplane $\{\varphi\in A^*:\varphi(1)=1\}$. It is a compact (hence closed) convex subset of $B$.
Extremal points (states that cannot be written as convex combinations of other states) are called \emph{pure states}.
The set of pure states, which we denote by $\mathcal{P}(A)$, is in general not closed (hence not compact) in the weak-* topology, cf.~\cite[Pag.~261]{Kad}; its closure $\overline{\mathcal{P}(A)}$ is the so-called \emph{pure state space} of $A$ (and it may contain states that are not pure).
By Krein-Milman theorem, %\footnote{Stating that if $K$ a compact convex subset of a locally convex Hausdorff topological vector space, then $K$ is the closed convex hull of its extreme points.} 
$\mathcal{S}(A)$ is the closure of the convex hull of $\mathcal{P}(A)$ (in particular $\mathcal{P}(A)$ is not empty).

If $A=\mathcal{B}(H)$ is the C*-algebra of all bounded operators on a Hilbert space $H$, the set $\mathcal{P}(A)$ is closed in the weak-* topology if and only if $H$ is finite-dimensional \cite[Ex.~4.6.69]{Kad}. More generally, $\mathcal{P}(A)$ is closed for every
finite-dimensional C*-algebra $A$ (cf.~Cor.~\ref{cor:puress}).

By Riesz-Markov-Kakutani representation theorem, if $A=C(X)$ is the C*-algebra of continuous functions on a compact Hausdorff space $X$ (with sup norm), every state $\varphi$ is given by a unique regular Baire probability measure $\mu$ on $X$ via the formula:
$$
\varphi(f)=\int_Xf(x)d\mu(x) \;.
$$
Pure states are normalized point measures, i.e.~evaluations at points of $X$. The map
\begin{equation}\label{eq:evaluation}
X\to\mathcal{P}(C(X)) \;,\qquad x\mapsto\mathrm{ev}_x,
\end{equation}
is a homeomorphism between $X$ with its topology and $\mathcal{P}(C(X))$ with weak-* topology (see e.g.~\cite[Sect.~1.3]{GVF}). In particular $\mathcal{P}(C(X))$ is compact, hence closed in $\mathcal{S}(C(X))$.

\begin{rem}
Because of the homeomorphism \eqref{eq:evaluation}, if $X$ is a compact smooth manifold, there is a smooth structure on $\mathcal{P}(C(X))$ compatible with the weak-* topology.
\end{rem}

A natural question is whether one can equip the state space of an arbitrary C*-algebras with a generalized smooth structures compatible with the weak-* topology and that possibly, when $A=C(X)$ are functions on a manifold, allow us to reconstruct $X$ from the pure state space through the map \eqref{eq:evaluation}.

An analogous question in the framework of metric structures --- the study of metrics (distances) on state spaces inducing the weak-* topology --- led to a well developed line of research, due mainly to M.A.~Rieffel and his school (see e.g.~the seminal paper \cite{Rie} or the recent review \cite{Lat}). A celebrated extended metric coming from a generalized Dirac operators is the spectral distance of A.~Connes \cite{Con94}, that generalizes the Wasserstein distance of order $1$ of transport theory to the non-commutative world (see e.g.~in \cite{DM10}). It is also worth mentioning that, if $X$ is a connected compact Riemannian manifold (with no boundary) and $\omega_g$ the Riemannian density, one can consider the space $\mathcal{P}^\infty(X)$ of probability measures of the form $f\omega_g$ with $f\in C^\infty(X)$; such a space, equipped with the Wasserstein metric of order $2$, is a formal Riemannian manifold. In particular, it is an infinite-dimensional smooth manifold in a suitable sense (see \cite{Lott} for the details), with underlying topology given by the smooth topology.

\medskip

In this paper we will be mainly concerned with finite-dimensional C*-algebras. By the well-known classification theorem (see for example \cite[Theorem III.1.1]{Dav96}), any such algebra is isometrically \mbox{$*$-isomorphic} to a finite direct sum $A$ of full matrix algebras:
\begin{equation}\label{eq:finitedim}
A=\bigoplus_{i=1}^kM_{n_i}(\C) \;,
\end{equation}
where $n_1,\ldots,n_k$ are (not necessarily distinct) positive integers. %and $M_j(\C)$ denotes the algebra of $j\times j$ complex matrices. 
Let $n:=n_1+\ldots+n_k$. We will think of $A$ in \eqref{eq:finitedim} as the subalgebra of $M_n(\C)$ made of all block diagonal matrices with blocks of size $n_1,\ldots,n_k$. Let:
$$
\inner{a,b}:=\tr(a^*b) \;,\qquad\forall\;a,b\in M_n(\C),
$$
be the Hilbert-Schmidt inner product. Let us denote by $\mathbb{D}(A)$ the set of all $\rho\in A$ that are positive and with trace $1$ -- we call such elements \emph{density matrices} -- and by $\mathbb{P}(A)\subset\mathbb{D}(A)$ the set of Hermitian matrices $\rho\in A$ satisfying $\rho^2=\rho$ -- we call such elements \emph{projections}. The isomorphism of vector spaces
\begin{equation}\label{eq:duality}
A\to A^* \;,\qquad \rho\mapsto\inner{\rho,\,.\,},
\end{equation}
gives a bijection between $\mathbb{D}(A)$ and $\mathcal{S}(A)$. Every density matrix $\rho$ is a convex combination of its eigenprojections: the corresponding state is pure if and only if $\rho$ is itself a projection.
That is, the map \eqref{eq:duality} restricts to a bijection $\mathbb{P}(A)\to\mathcal{P}(A)$.

As costumary, the set of selfadjoint elements of $A$ will be denoted by $A^{\mathrm{sa}}$, and the set of positive elements by $A_+$. The former is a real vector subspace of $A$, and the latter is a convex cone in $A^{\mathrm{sa}}$.

\subsection{Stratified spaces}\label{sec:revB}
We will adopt the terminology of \cite{GMP} (see also \cite{Pfl}). In the following, by ``smooth manifold'' we mean a finite-dimensional Hausdorff second countable $C^\infty$ real manifold without boundary.

\begin{df}\label{def:decomposition}
Let $I$ be a set. An \emph{$I$-decomposition} of a topological space $X$ is a locally finite collection of disjoint locally closed subsets $S_i\subset X$, one for each $i\in I$, such that
\begin{itemize}\itemsep=2pt
\item[(1)] $X=\bigcup_{i\in I}S_i$,
\item[(2)] $S_i\cap\overline{S}_j\neq\emptyset$ implies $S_i\subset\overline{S}_j$.
\end{itemize}
The sets $\{S_i\}$ are called \emph{pieces} of the decomposition, and a topological space $X$ equipped with an $I$-decomposition will be called an \emph{$I$-decomposed space}.
\end{df}

By ``locally finite'' we mean that every point has a neighborhood which intersects only finitely many pieces; a set $S\subset X$ is ``locally closed'' if it is the intersection of an open and a closed subset of $X$.
Condition (2) is the so-called \emph{frontier condition}, saying that if a piece $S_i$ intersects the closure of $S_j$, it must lie in the frontier $\overline{S}_j\smallsetminus S_j$.
It follows easily from the frontier condition that the relation
$$
i\leq j \iff S_i\subset\overline{S}_j
$$
defines a partial order on $I$, and that for all $j\in I$:
\begin{equation}\label{eq:shortproof}
\overline{S}_j=\bigcup_{i\leq j}S_i \;.
\end{equation}
As a consequence:

\begin{rem}\label{rem:closed}
$S_i$ is closed $\iff$ $i$ is a minimal element of $(I,\leq)$.\footnote{If $S_i=\overline{S}_i$, from $\overline{S}_i\cap S_j=S_i\cap S_j=\emptyset\;\forall\;j\neq i$ it follows that $\nexists\;j< i$.
Conversely, if $\nexists\;j< i$, from \eqref{eq:shortproof} it follows that $\overline{S}_i=S_i$.}
\end{rem}

On any subset $X$ of an $I$-decomposed space $Y=\bigcup_{i\in I}R_i$ there is an obvious $I$-decomposition with pieces $S_i:=R_i\cap X$ and compatible with the subspace topology.

An example is the decomposition of $M_n(\C)$ into subsets of matrices of fixed rank. It follows that any subset of $M_n(\C)$ is a decomposed space, in particular the set of density matrices of a finite-dimensional C*-algebra is a decomposed space. In this example, $I$ is totally ordered and the least element, the unique closed piece of the decomposition, is given by rank $1$ density matrices, i.e.~pure states.

\begin{df}\label{def:stratification}
Let $X$ be a closed subset of a smooth manifold $M$.
An $I$-decomposition of $X$ is called a \emph{Whitney stratification} provided each $S_i$ is a smooth embedded submanifold of $M$ and, for all $i\leq j$, Whitney's condition (B) is satisfied:
\begin{itemize}
\item[(B)] suppose $(x_k)$ is a sequence of points of $S_j$ and $(y_k)$ a sequence of points of $S_i$, both converging to some $y\in S_i$, and let $\ell_k$ be the line through $x_k$ and $y_k$ in some local chart on $M$.\footnote{For $k$ big enough, all points are contained in a chart around $y$.} If $(\ell_k)$ converges to some limiting line $\ell$ and the tangent spaces $T_{x_k}S_j$ converge to some limiting space $\tau$ in the Grassmann bundle of $TM$, then $\ell\subset\tau$.
\end{itemize}
We will say that $X$ is a \emph{(Whitney) stratified space}, and call the sets $\{S_i\}$ \emph{strata} of the stratification.
\end{df}

In the above definition, Whitney's condition (A) is omitted, since it is implied by (B). If Whitney's condition (B) is satisfied in a chart around $y$, it is satisfied in any chart around $y$ \cite[Lemma 1.4.4]{Pfl}.
In \cite{Pfl}, there is an additional assumption in the definition of decomposed space: $X$ is required to be a paracompact separable Hausdorff space, and each $S_i$ a smooth manifold in the induced topology. Such conditions are automatically satisfied if $X$ is a Whitney stratified space.

For $k\geq 0$, we call \emph{$k$-skeleton} of a Whitney stratified space $X$ the decomposed space:
$$
X^k:=\bigcup_{i\in I,\dim(S_i)\leq k}S_i
$$
Clearly $X^k\subset X^{k+1}$. The least $n$ for which $X=X^n$ is the \emph{dimension} of the stratification.

Notice that any Whitney stratification of the sphere $\mathbb{S}^{n-1}\subset\R^n$ defines a Whitney stratification of $\R^n$, called \emph{conical stratification} induced by the one of the sphere, whose strata are the open cones of the strata of $\mathbb{S}^{n-1}$ plus the origin.

An important result about the local structure of stratifications is that, if $X$ is an \mbox{$n$-dimensional} Whitney stratified space, for any point $x$ of a $k$-dimensional stratum there exists a Whitney stratification of $\mathbb{S}^{n-k-1}$ and a homeomorphism taking strata into strata from a neighbourhood $N_x$ of $x$ to $\R^k\times\R^{n-k}$,
where $\R^{n-k}$ is given the conical stratification and $\R^k$ the one with only one stratum.

This motivates the following recursive definition.

\begin{df}\label{def:pseudoM}
A $0$-dimensional \emph{pseudo-manifold} is a countable set with discrete topology. For $n>0$, an $n$-dimensional pseudo-manifold is a paracompact Hausdorff topological space $X$ with a filtration
$$
X=X^n\supset X^{n-1}\supset\ldots\supset X^1\supset X^0
$$
by closed subsets such that:
\begin{itemize}\itemsep=0pt
\item[(1)] $X\smallsetminus X^{n-1}$ is dense in $X$,
\item[(2)] any $x\in X^i\smallsetminus X^{i-1}$ admits a neighbourhood $N_x$ in $X$ of the form
$$
N_x\stackrel{\phi}{\simeq}\R^i\times\mathcal{C}(L) ,
$$
where $L=L^{n-i-1}\supset\ldots\supset L^1\supset L^0$ is an $(n-i-1)$-dimensional pseudo-manifold, $\mathcal{C}(L)=L\times [0,1)/L\times\{0\}$ the cone over $L$, and $\phi$ a homeomorphism preserving the filtration:
$$
N_x\cap X^{i+j+1}\stackrel{\phi}{\simeq}\R^i\times\mathcal{C}(L_j)
$$
for all $0\leq j\leq n-i-1$.\footnote{Without the property (1), $X$ is what is called a ``topologically stratified spaces'' in \cite{Kir};
in the definition of ``pseudo-manifold'' of dimension $n$ in \cite{Kir} they make the additional assumption that $X^{n-1}=X^{n-2}$.}
\end{itemize}
\end{df}

Every Whitney stratified space is a pseudo-manifold, with filtration given by skeletons.

\smallskip

We close this section with a theorem that will be useful later on. Given a Lie group $G$ with a proper smooth action on a smooth manifold $M$, we say that $x,y\in M$ are of the same \emph{orbit type} if the isotropy groups of the two points are conjugated subgroups of $G$. If $H\subset G$ is a subgroup, we denote by $(H)$ its conjugacy class and by $M_{(H)}$ be the set of points of $M$ with isotropy group conjugated to $H$.
If $I$ is the set of conjugacy classes of subgroups $H$ of $G$, one has a partition:
$$
M=\bigcup_{(H)\in I}M_{(H)} \;.
$$
Such a partition is in fact a Whitney stratification, with partial order given by:
\begin{equation}\label{eq:partialorder}
(K)\leq (H)\iff H\text{ is conjugated (in $G$) to a subgroup of }K
\end{equation}
(notice the counter-intuitive order). The strata of this stratification are called \emph{orbit types}.

\begin{thm}[{\protect\cite[Theorem 4.3.7]{Pfl}}]\label{thm:1}
Any proper smooth action of a Lie group $G$ on a smooth manifold $M$ induces a Whitney stratification, with strata given by orbit types and partial order \eqref{eq:partialorder}.
\end{thm}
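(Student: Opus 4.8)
The plan is to reduce the global statement to a local, linear model by means of the slice theorem, and then to obtain Whitney's condition (B) inductively from the conical structure of linear actions. First I would record the two consequences of properness that make everything work: every isotropy group $G_x$ is compact, and $M$ carries a $G$-invariant Riemannian metric. Fixing such a metric, the normal exponential map identifies a $G$-invariant tubular neighbourhood of any orbit $G\cdot x$ with the associated bundle $G\times_H V$, where $H:=G_x$ and $V:=T_xM/T_x(G\cdot x)$ carries the linear slice representation of the compact group $H$. Since the orbit-type partition is $G$-invariant, on such a neighbourhood it is, after a local trivialisation of the bundle $G\times_H V\to G/H$, the product of the trivial one-stratum decomposition of an open subset of $\R^{\dim(G/H)}$ with the $H$-orbit-type decomposition of the slice $V$. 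Every local question about the stratification near $x$ thus becomes a question about a linear action of a compact Lie group on a vector space.

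With this reduction I would first check that the partition is a decomposition in the sense of Definition \ref{def:decomposition}. That each $M_{(H)}$ is a smooth embedded submanifold follows from the local model: the points of $G\times_H V$ with isotropy conjugate to $H$ are exactly $G\times_H V^H\cong (G/H)\times V^H$, with $V^H$ the linear fixed-point subspace (here one uses that a subgroup of the compact group $H$ that is conjugate to $H$ must equal $H$). The frontier condition and the partial order \eqref{eq:partialorder} are read off from the same model: at a point with isotropy $K$ the slice is a $K$-representation in which the orbit types that occur are precisely those $(L)$ with $L$ conjugate to a subgroup of $K$, and as the slice variable tends to $0$ these strata accumulate onto $M_{(K)}$; this is exactly the assertion that $(K)\le(L)$ for all such $L$, which is the content of \eqref{eq:partialorder} (note the counter-intuitive order). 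Local finiteness amounts to the claim that a linear representation of a compact group has only finitely many orbit types near the origin, which I would prove by the same induction on $\dim V$ used below for condition (B), working if necessary with connected components of orbit types.

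It remains to establish Whitney's condition (B), which is the heart of the argument. Since (B) is a local, chart-independent condition (as recalled after Definition \ref{def:stratification}), it is invariant under the tube diffeomorphism, and since (B) is moreover stable under products with a smooth manifold factor, the product structure above reduces the problem to the linear $H$-action on the slice $V$. Here I would exploit the conical nature of the decomposition. Splitting $V=V^H\oplus W$ into the fixed subspace and an $H$-invariant complement, the orbit-type decomposition of $V$ is the product of the one-stratum decomposition of $V^H$ with that of $W$, where now $W^H=\{0\}$; so it suffices to treat $W$. Because the $H$-action on $W$ is linear with no nonzero fixed vectors, every orbit type is dilation-invariant and the origin is its own stratum, so the orbit-type decomposition of $W$ is exactly the conical stratification induced by the orbit-type decomposition of the unit sphere $S(W)$. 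By the remark on conical stratifications recalled before Definition \ref{def:pseudoM}, it then suffices to produce a Whitney stratification of $S(W)$ by orbit types, and applying the slice theorem to the proper $H$-action on the compact manifold $S(W)$ --- whose slices are representations of compact groups of dimension $<\dim W$ --- an induction on dimension (with the $0$-dimensional case trivial) closes the argument.

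The step I expect to be the genuine load-bearing part is the slice theorem and the faithful transport of the decomposition through it: properness is used precisely to guarantee compact isotropy and the invariant metric on which the tube is built, and one must verify that the orbit-type decomposition really becomes the product model above, so that condition (B) localises and then bootstraps through the splitting and the cone construction. Once the local model is in place, the analytic content of (B) is entirely absorbed into the two facts quoted from the review --- the chart-independence of (B) and the fact that the cone over a Whitney stratified sphere is Whitney --- together with the induction on the slice dimension; the remaining work is the careful bookkeeping of orbit types under these reductions rather than any new limiting estimate.
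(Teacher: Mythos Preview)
The paper does not give a proof of this theorem; it is quoted verbatim from \cite[Theorem 4.3.7]{Pfl} and used as a black box. There is therefore no ``paper's own proof'' to compare your proposal against. For what it is worth, your sketch follows the standard route (and essentially the one in Pflaum's monograph): use properness to get compact isotropy and an invariant metric, apply the slice theorem to linearise the action near an orbit, split the slice as $V^H\oplus W$, and recognise the orbit-type decomposition of $W$ as the conical stratification over the unit sphere $S(W)$, then induct on dimension. The only point I would flag is bookkeeping: the paper's Definition~\ref{def:stratification} does not require strata to be connected, whereas the inductive argument (and Pflaum's own formulation) is cleanest when one passes to connected components of orbit types; you gesture at this when you say ``working if necessary with connected components,'' and you should make that passage explicit, since the frontier condition and local finiteness are typically proved for connected pieces and then reassembled.
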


If $M$ is a finite-dimensional real vector space with a linear action by a compact group $G$, one can prove that there is only a finite number of non-empty orbit types \cite[Prop.~4.2.8]{Sni13}.

%In a number of interesting cases one has only finitely many orbit types. For example when $M$ is compact \cite[Lemma 4.3.6]{Pfl} or when $M$ is a finite-dimensional real vector space with a linear action by a compact group $G$ \cite[Prop.~4.2.8]{Sni13}. We are interested both in compact subsets and in open submanifolds of finite-dimensional vector spaces.

\section{On the topology of quantum states}\label{sec:prel}

Recall that a state (of a finite-dimensional C*-algebra) is called \emph{maximally mixed} if its density matrix is proportional to the identity matrix.

As a preliminary remark, it is not difficult to prove that if $A$ is finite-dimensional, $\mathcal{S}(A)$ is homeomorphic to a closed ball. For $A$ as in \eqref{eq:finitedim}, let $H$ be the affine subspace of $A^{\mathrm{sa}}$ of all matrices with trace $1$. Positive invertible matrices form an open subset of $H$, and $\mathbb{D}(A)$ is a compact convex subset with non-empty interior (the maximally mixed state is, for example, an interior point).
Any compact convex subset of a finite-dimensional real vector space with non-empty interior is homeomorphic to a closed ball, and the homeomorphism sends extreme points to the boundary sphere (see e.g.~\cite[Prop.~5.1]{Lee}). Thus:

\begin{rem}
For $A$ as in \eqref{eq:finitedim}, $\mathcal{S}(A)$ is homeomorphic to the closed unit ball in $\R^m$, with $m:=\dim_{\C}A-1$, with a homeomorphism that sends $\mathcal{P}(A)$ to the unit sphere.
\end{rem}

This homeomorphism, however, is not a convex function and forgets most of the structure of the state space. For example, $\mathcal{S}(M_2(\C))$ and $\mathcal{S}(\C^4)$ are both homeomorphic to a 3-ball, but in the former case the pure state space is the boundary 2-sphere while in the second case it consists of 4 points (it follows from Cor.~\ref{cor:puress} that the boundary of the ball is not the pure state space, except in the simple cases $A=\C$ or $A=M_2(\C)$).

The next proposition characterizes the state space -- in particular the set of pure states -- of a direct sum of C*-algebras in terms of the state space of each factor. In the proposition, we \emph{don't} assume that the summands are finite-dimensional.
The topology considered on the state spaces is the weak-* one.

\begin{lemma}\label{lemma:directsum}
Let $A=\bigoplus_{i=1}^k A_i$ be a direct sum of finitely many unital C*-algebras $A_1,\ldots,A_k$ and, for every $i=1,\ldots,k$, identify $A_i$ with its image in $A$ under the (non-unital) $*$-homomorphism
$$
a\mapsto\underbrace{0\oplus\ldots\oplus 0}_{i-1\textup{ times}}{}\oplus a\oplus \underbrace{0\oplus\ldots\oplus 0}_{k-i\textup{ times}} \;,\qquad\forall\;a\in A_i.
$$
Use the pullback of the projection onto the $i$-th summand to identify linear functionals on $A_i$ with (suitable) linear functionals on $A$.
Then
\begin{itemize}\itemsep=2pt
\item[(i)] Any $\varphi\in\mathcal{S}(A)$ is a convex combination $\varphi=\sum_{i=1}^k\lambda_i\varphi_i$ where $\varphi_i\in \mathcal{S}(A_i)$.

\item[(ii)] $\mathcal{P}(A)$ is homeomorphic to $\bigsqcup_{i=1}^k \mathcal{P}(A_i)$.
\end{itemize}
\end{lemma}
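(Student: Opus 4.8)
The plan is to exploit the central projections $1_i\in A$, namely the images of the units of the summands $A_i$ under the embedding fixed in the statement; they satisfy $1_i1_j=\delta_{ij}1_i$, $1_i^*=1_i$ and $\sum_{i=1}^k 1_i=1_A$, while each projection $\pi_i\colon A\to A_i$ is a unital $*$-homomorphism whose pullback $\pi_i^*\colon A_i^*\to A^*$, $\psi\mapsto\psi\circ\pi_i$, is exactly the identification used in the statement. Because $\pi_i$ is a unital $*$-homomorphism, $\pi_i^*$ carries $\mathcal{S}(A_i)$ into $\mathcal{S}(A)$, and because $\pi_i$ is surjective, $\pi_i^*$ is injective; these two facts will be used repeatedly.

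For (i), given $\varphi\in\mathcal{S}(A)$ I would set $\lambda_i:=\varphi(1_i)$. Positivity of $\varphi$ gives $\lambda_i\geq0$, and $\sum_i\lambda_i=\varphi(1_A)=1$, so the $\lambda_i$ are convex weights. When $\lambda_i>0$ I define $\varphi_i:=\lambda_i^{-1}\varphi|_{A_i}$, and using that the embedding $A_i\hookrightarrow A$ is a $*$-homomorphism onto its image one checks $\varphi_i\in\mathcal{S}(A_i)$. Writing $a=\sum_i\pi_i(a)$ with each summand embedded back into $A$, one gets $\varphi(a)=\sum_i\varphi(\pi_i(a))=\sum_i\lambda_i(\pi_i^*\varphi_i)(a)$; for the indices with $\lambda_i=0$ the Cauchy--Schwarz inequality $|\varphi(a_i)|^2=|\varphi(1_ia_i)|^2\leq\varphi(1_i)\,\varphi(a_i^*a_i)=0$ shows the corresponding term vanishes, so $\varphi_i$ may be chosen arbitrarily there. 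This yields the asserted convex decomposition.

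For (ii) the first step is to identify the pure states. If $\varphi$ is pure, the convex decomposition of (i) must be trivial (the states $\pi_i^*\varphi_i$ with $\lambda_i>0$ are pairwise distinct, being supported on different summands), so a single weight $\lambda_{i_0}$ equals $1$ and $\varphi=\pi_{i_0}^*\varphi_{i_0}$; extremality of $\varphi$ together with injectivity of $\pi_{i_0}^*$ then forces $\varphi_{i_0}$ to be pure. Conversely, if $\varphi_i\in\mathcal{P}(A_i)$ and $\pi_i^*\varphi_i=t\psi+(1-t)\psi'$ with $0<t<1$, evaluating at $1_j$ for $j\neq i$ gives $\psi(1_j)=\psi'(1_j)=0$, hence by the same Cauchy--Schwarz argument $\psi,\psi'$ are supported on $A_i$, i.e.\ of the form $\pi_i^*\tilde\psi,\pi_i^*\tilde\psi'$; injectivity of $\pi_i^*$ and purity of $\varphi_i$ then give $\psi=\psi'=\pi_i^*\varphi_i$, so $\pi_i^*\varphi_i$ is pure. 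Thus $\mathcal{P}(A)=\bigsqcup_i\pi_i^*(\mathcal{P}(A_i))$ as a set, the images being disjoint.

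Finally I would assemble the map $\Phi\colon\bigsqcup_i\mathcal{P}(A_i)\to\mathcal{P}(A)$ equal to $\pi_i^*$ on the $i$-th piece; by the previous step it is a bijection. Its continuity is immediate from the universal property of the coproduct together with the weak-$*$ continuity of each $\pi_i^*$, which holds because $\mathrm{ev}_a\circ\pi_i^*=\mathrm{ev}_{\pi_i(a)}$. For the inverse, the key observation is that $\pi_i^*(\mathcal{P}(A_i))=\{\varphi\in\mathcal{P}(A):\varphi(1_i)=1\}$ is clopen in $\mathcal{P}(A)$, since on $\mathcal{P}(A)$ the continuous function $\varphi\mapsto\varphi(1_i)$ takes only the values $0$ and $1$; on each such piece $\Phi^{-1}$ is the restriction $\varphi\mapsto\varphi|_{A_i}$, which is weak-$*$ continuous because it is built from the evaluations $\mathrm{ev}_{a_i}$. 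I expect the main obstacle to be exactly this last point: since the $A_i$ are not assumed finite-dimensional, the spaces $\mathcal{P}(A_i)$ need not be compact, so one cannot invoke the ``continuous bijection from a compact space to a Hausdorff space is a homeomorphism'' shortcut; continuity of $\Phi^{-1}$ must be verified directly, and it rests entirely on the clopen decomposition of $\mathcal{P}(A)$ furnished by the central projections $1_i$.
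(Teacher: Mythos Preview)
Your proof is correct and follows essentially the same route as the paper's: reduce to weights $\lambda_i=\varphi(1_i)$ via the central projections, show that pure states are supported on a single summand, and observe that the resulting pieces of $\mathcal{P}(A)$ are closed (hence clopen) to get the disjoint-union topology. Your version is in fact more complete than the paper's, which asserts the bijection $\mathcal{P}(A)\to\mathcal{P}(A_1)\sqcup\mathcal{P}(A_2)$ without explicitly verifying that $\pi_i^*$ carries $\mathcal{P}(A_i)$ into $\mathcal{P}(A)$, and which handles the vanishing case $\lambda_i=0$ via $\|\psi\|=\psi(1)$ rather than Cauchy--Schwarz; your direct check of continuity of $\Phi^{-1}$ (avoiding any compactness assumption on $\mathcal{P}(A_i)$) is likewise a point the paper leaves implicit.
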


\begin{proof}
It is enough to prove the statement when $A=A_1\oplus A_2$ and then work by induction.
Let $\varphi\in \mathcal{S}(A)$  and assume that $\lambda_1:=\varphi(1\oplus 0)$ and $\lambda_2:=\varphi(0\oplus 1)$ are both non-zero. Then $\varphi_1(a\oplus b):=\varphi(a\oplus 0)/\lambda_1$ and $\varphi_2(a\oplus b):=\varphi(0\oplus b)/\lambda_2$ are states on $A$ and $\varphi=\lambda_1\varphi_1+\lambda_2\varphi_2$. Such a $\varphi$ is not pure.

For any positive linear functional $\psi$ on a unital C*-algebra $B$, since $\|\psi\|=\psi(1)$, if $\psi(1)=0$ then $\psi$ is identically zero. Thus in the above notations if, for example, $\lambda_1=0$ then $\varphi=\varphi_2$. Either way, the decomposition (i) is valid.

From the above discussion, any $\varphi\in \mathcal{P}(A)$ either vanishes on $A_1\oplus 0$ or on $0\oplus A_2$. In the former case $a\mapsto\varphi(a\oplus 0)$ gives a pure state of $A_1$, in the latter $a\mapsto\varphi(0\oplus a)$ gives a pure states of $A_2$. There is then a bijection $\mathcal{P}(A)\to\mathcal{P}(A_1)\sqcup\mathcal{P}(A_2)$, and we want to prove that it is a homeomorphism.\footnote{At a set-theoretic level, a proof of this bijection is already in \cite[Lemma 2.1]{Mar}.}

Any convergent net of states vanishing on $0\oplus A_2$ clearly converges to a state vanishing on $0\oplus A_2$. Thus $\mathcal{S}(A_1)$ is closed in $\mathcal{S}(A)$. Similarly $\mathcal{S}(A_2)$ is closed in $\mathcal{S}(A)$.
This proves that $\mathcal{P}(A)\simeq \mathcal{P}(A_1)\sqcup \mathcal{P}(A_2)$ with the standard topology on a disjoint union.
\end{proof}

As a corollary, one has an explicit description of the pure state space of arbitrary finite-dimensional C*-algebras.
Recall that there is a homeomorphism
$$
\CP^{n-1}\to\mathbb{P}(M_n(\C))
$$
(a diffeomorphism if we think of the latter as an embedded submanifold of $M_n(\C)$) that sends a point \mbox{$[z_1:\ldots:z_n]$} into the matrix $\rho=(z_i\overline{z}_j)$. Recall also that $\mathcal{P}(M_n(\C))$ is closed in $\mathcal{S}(M_n(\C))$  \cite[Ex.~4.6.69]{Kad}. Thus:

\begin{cor}\label{cor:puress}
For $A$ as in \eqref{eq:finitedim}, $\mathcal{P}(A)$ is closed in $\mathcal{S}(A)$ and homeomorphic to a disjoint union 
of complex projective spaces: $\mathcal{P}(A)\simeq\bigsqcup_{i=1}^k\CP^{n_i-1}$.
\end{cor}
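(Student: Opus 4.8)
The plan is to deduce Corollary \ref{cor:puress} directly from Lemma \ref{lemma:directsum}(ii) together with the stated facts about full matrix algebras. First I would apply the lemma to the decomposition $A=\bigoplus_{i=1}^k M_{n_i}(\C)$ to obtain a homeomorphism $\mathcal{P}(A)\simeq\bigsqcup_{i=1}^k\mathcal{P}(M_{n_i}(\C))$. Then I would invoke the recalled homeomorphism $\CP^{n-1}\to\mathbb{P}(M_n(\C))$, which under the bijection \eqref{eq:duality} corresponds to $\CP^{n-1}\simeq\mathcal{P}(M_n(\C))$, applied to each summand with $n=n_i$. Composing, we get the desired $\mathcal{P}(A)\simeq\bigsqcup_{i=1}^k\CP^{n_i-1}$.

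For the closedness assertion, I would argue as follows. Each $\CP^{n_i-1}$ is compact, so the finite disjoint union is compact, and hence $\mathcal{P}(A)$ is compact as well since it is homeomorphic to a compact space. A compact subset of a Hausdorff space is closed, and $\mathcal{S}(A)$ is Hausdorff in the weak-* topology; therefore $\mathcal{P}(A)$ is closed in $\mathcal{S}(A)$. Alternatively, one could cite directly the fact recalled just above the statement --- that $\mathcal{P}(M_n(\C))$ is closed in $\mathcal{S}(M_n(\C))$ --- and combine it with the closedness of each $\mathcal{S}(M_{n_i}(\C))$ inside $\mathcal{S}(A)$ established in the proof of Lemma \ref{lemma:directsum}, so that each piece $\mathcal{P}(M_{n_i}(\C))$ is closed in $\mathcal{S}(A)$ and the finite union remains closed.

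The only subtle point, and the one I would treat most carefully, is matching the topology in the lemma's homeomorphism $\mathcal{P}(A)\simeq\bigsqcup_i\mathcal{P}(A_i)$ with the intrinsic topology on each $\mathcal{P}(M_{n_i}(\C))$: the lemma gives the \emph{disjoint union} topology, and I must check that the weak-* topology restricted to each summand $\mathcal{S}(M_{n_i}(\C))\subset\mathcal{S}(A)$ coincides with its own weak-* topology. This is already contained in the proof of the lemma, where each $\mathcal{S}(A_i)$ is identified as a closed subspace of $\mathcal{S}(A)$ via the pullback of the projection, so the embedding is a topological one; no extra work is needed beyond recording this identification. Everything else is a routine composition of known homeomorphisms, so I do not anticipate a genuine obstacle --- the corollary is essentially a bookkeeping consequence of the lemma and the standard description of pure states of $M_n(\C)$.
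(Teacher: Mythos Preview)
Your proposal is correct and follows exactly the paper's approach: the corollary is stated immediately after recalling the homeomorphism $\CP^{n-1}\simeq\mathcal{P}(M_n(\C))$ and the closedness of $\mathcal{P}(M_n(\C))$ in $\mathcal{S}(M_n(\C))$, and is deduced from Lemma~\ref{lemma:directsum}(ii) without further argument. Your extra care about matching the weak-* topologies and your compactness argument for closedness are both sound and only make explicit what the paper leaves implicit.
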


\begin{wrapfigure}[6]{r}{5.3cm}
\colorlet{mygreen}{green!50!black}
\colorlet{myred}{red!80!black}
\tdplotsetmaincoords{70}{70}

\begin{center}
\begin{tikzpicture}[tdplot_main_coords,scale=1.2,font=\small]

\coordinate (A) at (-1.5,0,1);
\coordinate (B) at (1.5,0,1);
\coordinate (C) at (0,3,0);
\coordinate (D) at (0,3,2);

\path[semithick]
		(A) edge[color=mygreen] node[below left] {$X$} (B)
		(C) edge[color=myred] node[right] {$Y$} (D)
		(A) edge[bend left=10] (D)
		(B) edge[bend right=10] (D)
		(B) edge[bend right=10] (C)
		(A) edge[dashed,bend left=10] (C);

\end{tikzpicture}
\end{center}

\vspace{-7pt}
\caption{Join $X\star Y$.}

\end{wrapfigure}
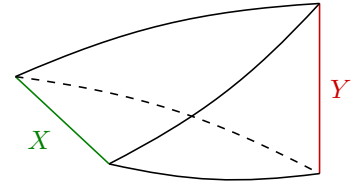
Recall that, given two topological spaces $X$ and $Y$, the \emph{join} $X\star Y$ is by definition the quotient of the product $X\times Y\times [0,1]$ by the equivalence relation generated by:
\begin{center}
$(x_0,y_1,0)\sim (x_0,y_2,0)$
\quad and \quad
$(x_1,y_0,1)\sim (x_2,y_0,1)$
\end{center}
for all $x_0,x_1,x_2\in X$ and all $y_0,y_1,y_2\in Y$.
Another consequence of Lemma \ref{lemma:directsum} is the following.

\begin{prop}
If $A=A_1\oplus A_2$ is a direct sum of two unital C*-algebras, then the convex map
$F:\mathcal{S}(A_1)\times\mathcal{S}(A_2)\times [0,1]\to \mathcal{S}(A)$ given by
\begin{equation}\label{eq:convexmap}
F(\varphi_1,\varphi_2,t):=(1-t)\varphi_1+t\varphi_2 \;,
\end{equation}
induces a homeomorphism between $\mathcal{S}(A)$ and $\mathcal{S}(A_1)\star\mathcal{S}(A_2)$.
\end{prop}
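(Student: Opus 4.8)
The plan is to show that $F$ descends to a continuous bijection on the join and then invoke a compactness argument. First I would note that $F$ is continuous for the weak-$*$ topologies: for each fixed $a\in A$, the evaluation $(\varphi_1,\varphi_2,t)\mapsto (1-t)\varphi_1(a)+t\varphi_2(a)$ is continuous, which is exactly the requirement for continuity into $A^*$ with the weak-$*$ topology. Moreover $F(\varphi_1,\varphi_2,0)=\varphi_1$ does not depend on $\varphi_2$ and $F(\varphi_1,\varphi_2,1)=\varphi_2$ does not depend on $\varphi_1$, so $F$ is constant on the equivalence classes defining $\mathcal{S}(A_1)\star\mathcal{S}(A_2)$; by the universal property of the quotient topology it factors through a continuous map $\overline{F}:\mathcal{S}(A_1)\star\mathcal{S}(A_2)\to\mathcal{S}(A)$.

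Surjectivity of $\overline{F}$ is immediate from Lemma~\ref{lemma:directsum}(i): writing $\varphi=\lambda_1\varphi_1+\lambda_2\varphi_2$ with $\varphi_i\in\mathcal{S}(A_i)$ and $\lambda_1+\lambda_2=1$, we have $\varphi=F(\varphi_1,\varphi_2,\lambda_2)$. For injectivity the key observation is that the parameter $t$ and, when $t$ is interior, the two states can be recovered from the image: under the identifications of the statement one has $\varphi(1\oplus 0)=1-t$ and $\varphi(0\oplus 1)=t$, so $t=\varphi(0\oplus 1)$ is determined by $\varphi=F(\varphi_1,\varphi_2,t)$; for $t\in(0,1)$ both coefficients are nonzero and $\varphi_1,\varphi_2$ are recovered exactly as in the proof of Lemma~\ref{lemma:directsum} (dividing the restrictions of $\varphi$ by $1-t$ and by $t$). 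Two triples with the same image therefore share the same $t$ and, if $t\in(0,1)$, the same $\varphi_1$ and $\varphi_2$; while for $t=0$ (resp.\ $t=1$) they share the same $\varphi_1$ (resp.\ $\varphi_2$), which is precisely the condition for them to be identified in the join. Hence $\overline{F}$ is injective.

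Finally I would close the argument by compactness. Each $\mathcal{S}(A_i)$ is weak-$*$ compact by the Banach--Alaoglu theorem, so the product $\mathcal{S}(A_1)\times\mathcal{S}(A_2)\times[0,1]$ is compact and its quotient $\mathcal{S}(A_1)\star\mathcal{S}(A_2)$ is compact as well; on the other hand $\mathcal{S}(A)$ is Hausdorff in the weak-$*$ topology. Since a continuous bijection from a compact space to a Hausdorff space is automatically a homeomorphism, $\overline{F}$ is the desired homeomorphism.

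I expect the only delicate point to be matching the degenerate behaviour of $F$ at the endpoints $t=0,1$ with the two collapsings built into the join: one must check that the freedom in the forgotten factor is exactly the freedom quotiented out in $\mathcal{S}(A_1)\star\mathcal{S}(A_2)$. The compactness step is what lets me avoid constructing an explicit continuous inverse, which would otherwise require some care near the two cone points of the join.
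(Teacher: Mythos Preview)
Your argument is correct and mirrors the paper's proof almost step for step: surjectivity via Lemma~\ref{lemma:directsum}, injectivity of the induced map by recovering $t$ from evaluation at $1\oplus 0$ and $0\oplus 1$ and then handling the three cases $t=0$, $0<t<1$, $t=1$, and the conclusion by the compact-to-Hausdorff trick. The only cosmetic difference is that you spell out continuity of $F$ and the compactness of the join explicitly, which the paper leaves implicit.
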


\begin{proof}
Consider the diagram
\begin{equation}\label{eq:quozsmooth}
\begin{tikzpicture}[baseline=(current  bounding  box.center)]

\node (A) at (0,2) {$\mathcal{S}(A_1)\times\mathcal{S}(A_2)\times[0,1]$};
\node (B) at (4,2) {$\mathcal{S}(A)$};
\node (C) at (2,0) {$\mathcal{S}(A_1)\star\mathcal{S}(A_2) $};

\path[font=\footnotesize,-To]
      (A) edge node[above] {$F$} (B)
      (C) edge[dashed] node[right=3pt]{$\widetilde{F}$} (B)
      (A) edge node[left=3pt,pos=0.53]{$\pi$} (C);

\end{tikzpicture}
\end{equation}
where $\pi$ is the quotient map and $F$ is the map \eqref{eq:convexmap}.
As before, here $\varphi_1(a\oplus b):=\varphi_1(a)$ and $\varphi_2(a\oplus b):=\varphi_2(b)$ for all $a\in A_1$ and $b\in A_2$.
It follows from Lemma \ref{lemma:directsum} that $F$ is surjective.

Suppose
$$
F(\varphi_1,\varphi_2,t)=F(\psi_1,\psi_2,s) \;.
$$
By applying both sides to elements of the form $a\oplus 0$ and $0\oplus b$ we see that the above condition is equivalent to
$$
(1-t)\varphi_1=(1-s)\psi_1 \qquad\text{and}\qquad t\varphi_2=s\psi_2 \;.
$$
Applying both sides to the unit element of $A_1$ risp.~$A_2$ we get $t=s$. If $0<t=s<1$ we get $(\varphi_1,\varphi_2)=(\psi_1,\psi_2)$.
If $t=s=0$ we get $\varphi_1=\psi_1$, while $\varphi_2,\psi_2$ are arbitrary.
If $t=s=1$ we get $\varphi_2=\psi_2$, while $\varphi_1,\psi_1$ are arbitrary.

The fibres of $F$ are then exactly the equivalence classes of the relation generated by $(\varphi_1,\varphi_2,0)\sim (\varphi_1,\psi_2,0)$ and $(\varphi_1,\varphi_2,1)\sim (\psi_1,\varphi_2,1)$. We deduce that $\widetilde{F}$ in \eqref{eq:quozsmooth} exists, it is a bijection, and it is continuous by the characteristic property of quotient maps.
Since the domain of $\widetilde{F}$ is compact, $\widetilde{F}$ is a homeomorphism.
\end{proof}

As costumary, let us identify $X\star Y$ as a set with $X\sqcup (X\times Y\times I)\sqcup Y$ where $I:= (0,1)$ is the open interval. It is then interesting to observe that any decomposition of $X$ and $Y$ (in the sense of Def.~\ref{def:decomposition}) induces a decomposition of $X\star Y$. Since we are interested in state spaces of unital C*-algebras, we will work in the category of compact Hausdorff spaces.

\begin{prop}\label{prop:11}
Let $X=\bigcup_{i\in I}R_i$ and $Y=\bigcup_{j\in J}S_j$ be decompositions of compact Hausdorff spaces. Then, a decomposition of the join is given by:
\begin{equation}\label{eq:decjoin}
X\star Y=\bigcup\nolimits_{i,j} R_i\sqcup ( R_i\times S_j\times I ) \sqcup S_j \;.
\end{equation}
\end{prop}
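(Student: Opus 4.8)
The plan is to verify directly the three requirements of Definition \ref{def:decomposition} for the collection \eqref{eq:decjoin}, using as the main tool the quotient map $q\colon X\times Y\times[0,1]\to X\star Y$. Since $X$ and $Y$ are compact, so is $X\times Y\times[0,1]$, and $X\star Y$ (its continuous image under $q$) is compact; one checks that the defining equivalence relation is closed, so $X\star Y$ is compact Hausdorff and $q$, as a continuous surjection from a compact space onto a Hausdorff space, is a \emph{closed} map. Compactness also forces each of the given decompositions of $X$ and $Y$ to have only finitely many non-empty pieces (a locally finite family on a compact space is finite), so the family \eqref{eq:decjoin} is automatically locally finite. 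That the three families $R_i$, $R_i\times S_j\times I$ and $S_j$ are pairwise disjoint and cover $X\star Y$ is immediate from the set-theoretic description $X\star Y=X\sqcup(X\times Y\times I)\sqcup Y$ together with the fact that the $R_i$ cover $X$ and the $S_j$ cover $Y$.

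Next I would establish that every piece is locally closed. The preimage $q^{-1}(X\times Y\times I)=X\times Y\times(0,1)$ is open, so the middle part $X\times Y\times I$ is open in $X\star Y$; since the equivalence relation is trivial on the saturated set $X\times Y\times(0,1)$, the map $q$ restricts to a homeomorphism of it onto the open middle part, and $R_i\times S_j\times I$ --- a product of locally closed sets --- is therefore locally closed in $X\star Y$. Dually, $q^{-1}(X)=X\times Y\times\{0\}$ and $q^{-1}(Y)=X\times Y\times\{1\}$ are closed, so the copies of $X$ and $Y$ are closed in $X\star Y$ and homeomorphic to $X$ and $Y$; hence each $R_i$ and each $S_j$, being locally closed inside a closed subspace, is locally closed in $X\star Y$.

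The heart of the argument is the frontier condition, for which I would compute the closures of the pieces. The key identity is that for a closed continuous map one has $\overline{q(A)}=q(\overline{A})$. Starting from the fact that the closure of $R_i\times S_j\times I$ in $X\times Y\times[0,1]$ is $\overline{R_i}\times\overline{S_j}\times[0,1]$, and using the relations $\overline{R_i}=\bigcup_{i'\leq i}R_{i'}$ and $\overline{S_j}=\bigcup_{j'\leq j}S_{j'}$ from \eqref{eq:shortproof}, one obtains
$$
\overline{R_i\times S_j\times I}^{\,X\star Y}=\Bigl(\bigcup_{i'\leq i}R_{i'}\Bigr)\sqcup\Bigl(\bigcup_{i'\leq i,\,j'\leq j}R_{i'}\times S_{j'}\times I\Bigr)\sqcup\Bigl(\bigcup_{j'\leq j}S_{j'}\Bigr),
$$
and similarly the closures of the copies of $R_i$ and $S_j$ in the join are $\bigcup_{i'\leq i}R_{i'}$ inside $X$ and $\bigcup_{j'\leq j}S_{j'}$ inside $Y$, respectively. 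Since each of these closures is a union of \emph{entire} pieces of \eqref{eq:decjoin} and the pieces are pairwise disjoint, any piece that meets a given closure must coincide with one of the pieces occurring in it, hence be contained in it; this is exactly the frontier condition (2).

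I expect the only delicate point to be the computation of $q\bigl(\overline{R_i}\times\overline{S_j}\times[0,1]\bigr)$ at the two endpoints of $[0,1]$: at $t=0$ the relation collapses the $Y$-coordinate, so this slice contributes the copies $R_{i'}$ with $i'\leq i$ in $X$ irrespective of $j'$, while at $t=1$ it collapses the $X$-coordinate and contributes the copies $S_{j'}$ with $j'\leq j$ in $Y$; the open part $t\in(0,1)$ contributes the middle pieces. Once this endpoint bookkeeping is carried out correctly, all three conditions of Definition \ref{def:decomposition} are verified and the proof is complete.
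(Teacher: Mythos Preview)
Your proof is correct and follows essentially the same route as the paper: both use that the quotient map is closed (compact domain, Hausdorff join) and the identity $\overline{q(A)}=q(\overline{A})$ for closed continuous maps to verify the frontier condition, and both establish local closedness by exploiting the decomposition $X\star Y=X\sqcup(X\times Y\times I)\sqcup Y$ into a closed, an open, and a closed part. If anything, you are slightly more careful than the paper: you explicitly justify local finiteness (compactness forces the index sets to be finite) and the Hausdorffness of the join, points the paper leaves implicit.
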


\begin{proof}
Let $\pi:X\times Y\times [0,1]\to X\star Y$ be the quotient map, and consider the disjoint sets $\mathcal{R}_i:=\pi(R_i\times Y\times\{0\})$, $\mathcal{S}_j:=\pi(X\times S_j\times\{1\})$ and $\mathcal{Q}_{ij}:=\pi(R_i\times S_j\times I)$.
We want to prove that the partition $X\star Y=\bigcup_i\mathcal{R}_i\cup\bigcup_j\mathcal{S}_j\cup\bigcup_{i,j}\mathcal{Q}_{ij}$ is a decomposition.

Notice that $\pi$ is a closed map, since the domain is compact and the codomain is Hausdorff.
Let $A\subset X$, $B\subset Y$, $C\subset X\times Y$.
Sets of the form $A\times Y\times\{0\}$, $X\times B\times\{1\}$ and $C\times I$ are saturated. For every $i$ and $j$,
by assumption the sets $R_i$ and $S_j$ are locally closed, that means
$$
R_i=A_i\cap \overline{R}_i \qquad\text{and}\qquad S_j=B_j\cap\overline{S}_j
$$
for suitable open sets $A_i\subset X$ and $B_j\subset Y$. It follows that:
\begin{align*}
\mathcal{R}_i &=\pi(A_i\times Y\times\{0\})\cap\pi(\overline{R}_i\times Y\times\{0\}) \;, \\
\mathcal{S}_j &=\pi(X\times B_j\times\{1\})\cap\pi(X\times \overline{S}_j\times\{1\}) \;, \\
\mathcal{Q}_{ij} &=\pi(A_i\times B_j\times I)\cap \pi(\overline{R}_i\times\overline{S}_j\times [0,1]) \;.
\end{align*}
Each one of the pieces $\mathcal{R}_i$, $\mathcal{S}_j$ and $\mathcal{Q}_{ij}$ is the intersection of the image of a saturated open set (hence open) and a closed set (hence closed, since $\pi$ is closed).

Since $\pi$ is closed, $\pi(\overline{D})=\overline{\pi(D)}$ for every subset $D$ of the domain. With this observation, the frontier condition is now a simple check.
\end{proof}

\section{On the decomposition into orbit types and by rank}\label{sec:orbits}

Let $A$ be a finite dimensional C*-algebra and consider the adjoint action $U(A)\times A\to A$, $(u,a)\mapsto uau^*$, of the group $U(A)$ of unitary elements of $A$. By invariance of the Hilbert-Schmidt inner product, the isomorphism \eqref{eq:duality} intertwines this action with its dual action on $A^*$. We are in the best possible scenario: a linear action of a compact Lie group on a finite-dimensional real vector space.
According to Theorem \ref{thm:1}, $A$ (or $A^*$ under the equivariant identification \eqref{eq:duality}) has a Whitney stratification into finitely many orbit types.

The subset $\mathbb{D}(A)$ of density matrices is $U(A)$-invariant: it can be partitioned into $U(A)$ orbits, that are compact smooth embedded submanifolds of $A$ (this is the point of view e.g.~of \cite{CCIMV}).
They have a decomposition into orbit types as well, in the sense of Def.~\ref{def:decomposition}, since as recalled on any subset of a decomposed space there is a natural decomposition obtained by intersecting the pieces with the subset. We will see however that orbit types in $\mathbb{D}(A)$ are not manifolds, but rather manifolds with boundary. This section is devoted to study some simple examples.
Before that, let us argue that, even if the one by orbit types is not a stratification, a stratification on $\mathbb{D}(A)$ exists.

\smallskip

Recall that a subset of $\R^n$ is called \emph{semialgebraic} if it is a finite union of subsets of $\R^n$ each one defined by finitely many polynomial equations and/or inequalities. Every semialgebraic set has a Whitney stratification with finitely many semialgebraic strata (see, e.g., \cite[Lemma I.2.10]{Shi97}).

By Sylvester's criterion, a Hermitian matrix is positive-semidefinite if and only if all of its principal minors are non-negative \cite[Cor.~7.1.5 and Theorem 7.2.5]{Hor}. Thus, $\mathbb{D}(A)$ is the subset of the real vector space $A^{\mathrm{sa}}$ defined by some polynomial inequalities (non-negativity of the principal minors) and a linear equation (trace equal to $1$). As such:

\begin{rem}
The set of density matrices $\mathbb{D}(A)$ is a closed semialgebraic subset of $A^{\mathrm{sa}}$. In particular, $\mathbb{D}(A)$ admits a semialgebraic Whitney stratification.
\end{rem}

An explicit description of such a stratification for full matrix algebras will be given in the next section. We will see that, for every $r\geq 1$, a stratum
$S_r$ is given by all the matrices of fixed rank $r$. The pure state space coincides with $S_1$, the unique stratum that is closed (see also Remark \ref{rem:closed}).

Let us now compare the decomposition of $\mathbb{D}(A)$ into orbit types and into subsets of matrices with fixed rank in few simple examples.

\subsection{A closed ball}
Let $A=M_2(\C)$. With the parametrization
\begin{equation}\label{eq:2x2}
\frac{1}{2}
\begin{bmatrix}
t+x_3 & x_1-\mathrm{i}x_2 \\
x_1+\mathrm{i}x_2 & t-x_3
\end{bmatrix} \;,\qquad t\in\R, x=(x_1,x_2,x_3)\in\R^3,
\end{equation}
of Hermitian matrices, $A_+$ is the cone of equation $\|x\|\leq t$, and $\mathbb{D}(A)$ is the intersection of such a cone with the hyperplane $t=1$, that is: a closed unit ball in $\R^3$.
Points in the boundary sphere --- usually referred to as \emph{Bloch sphere}, describing the pure states of a spin $1/2$ particle, see e.g.~\cite{BZ} --- correspond to pure states, points in the interior are mixed states. The origin of $\R^3$ corresponds to the maximally mixed state.

Under the homeomorphism between $\mathbb{D}(A)$ and the closed unit ball in $\R^3$, the adjoint action of $U(A)=U(2)$ becomes the natural action by rotations around the origin. The origin is a point of orbit type $(U(2))$, its complement is the dense orbit type $(U(1))$. We may observe already in this simple example that every non-empty open set intersects infinitely many orbits (the partition into orbits is not locally finite), and that the orbit type $(U(1))$ is not smooth submanifold (without boundary) of $\R^3$.

For $t=1$, the matrix \eqref{eq:2x2} has eigenvalues $\frac{1}{2}(1\pm\|x\|)$. Density matrices with rank $1$ are exactly those with $\|x\|=1$: the boundary of the ball. Density matrices with rank $2$ correspond to points in the interior of the ball.

\subsection{A truncated cone}
Let $A=\C\oplus M_2(\C)$. If we want to give a physical interpretation to this example, we can imagine that the state space describes the internal degrees of freedom of a couple of particles, one with spin $0$ and one with spin $1/2$.

Elements of $\mathbb{D}(A)$ are of the form
\begin{equation}\label{eq:3x3}
\begin{bmatrix}
1-t & 0 & 0 \\
0 & (t+x_3)/2 & (x_1-\mathrm{i}x_2)/2 \\
0 & (x_1+\mathrm{i}x_2)/2 & (t-x_3)/2
\end{bmatrix} \;,
\end{equation}
where $t\in [0,1]$, $x=(x_1,x_2,x_3)\in\R^3$ and $\|x\|\leq t$.
The state space is a truncated convex cone in $\R^4$, with typical section given by a closed $3$-ball (an illustration is in Figure \ref{fig:cone}a). The above matrix is a projection if and only if 
$$
\|x\|=t\in\{0,1\} \;.
$$
$\mathbb{P}(A)$ is the disjoint union, as expected, of a unit $2$-sphere and a point (the vertex).
Even if $\mathbb{D}(A)$ is homeomorphic to a closed unit ball in $\R^4$, it is clear from this example that the natural structure on the state space is that of a stratified space.
Notice that pure states form a measure zero subset of the boundary: boundary points with $0<\|x\|=t<1$ do not correspond to pure states. The maximally mixed state has $t=2/3$ and $x=0$.

The adjoint action of $U(A)=U(1)\times U(2)$ on a matrix \eqref{eq:3x3} doesn't change $t$, and rotates $x$ by the orthogonal matrix that is in the image of the projection $U(2)\to SO(3)$. There are two orbit types: points with $x=0$ and arbitrary $t$ (the ones lying on the symmetry axis of the cone) are of type $(U(1)\times U(2))$; all other points are of type $(U(1)\times U(1))$, and form a dense subset of the cone. Both orbit types have a non-empty boundary.

The matrix \eqref{eq:3x3} has eigenvalues $1-t$ and $(t\pm\|x\|)/2$. The rank is $1$ if{}f $\|x\|=t\in\{0,1\}$: this is the subset of the boundary of the cone made of pure states. The rank is $2$ if{}f $\|x\|<t=1$ or $0<\|x\|=t<1$: this is the set of boundary points that are not pure states (but can be written as a convex combination of exactly two pure states). The interior of the cone, points with $\|x\|<t$  and $0<t<1$, form the dense stratum of density matrices with maximal rank.

\begin{figure}[t]
\begingroup
\renewcommand{\arraystretch}{1.5}
\begin{tabular}{ccc}
\begin{tikzpicture}[scale=0.8,baseline=(current bounding box.center)]
	\draw[semithick] (1.5,4) arc (0:360:1.5 and 0.5);
	\draw[semithick] (-1.5,3.97) -- (0,0) -- (1.5,3.97);
	\draw (1.5,4) node[right] {$t=1$};
	\draw (0,0) node [right=2pt] {$t=0$};
  \filldraw (0,0) circle (0.03);

\end{tikzpicture}
&
\colorlet{mytriangle}{red!5}%green!5}
\colorlet{myedge}{red!80!black}%cyan!30!black}
\tdplotsetmaincoords{70}{70}
\begin{tikzpicture}[scale=3,baseline=(current bounding box.center),tdplot_main_coords]

\coordinate (O) at (0,0,0);
\coordinate (a) at (1,0,0);
\coordinate (b) at (0,1,0);
\coordinate (c) at (0.005,0.005,1);

\draw[->] (O) -- (1.55,0,0); %node[anchor=north west]{$a_1$};
\draw[->] (b) -- (0,1.25,0); %node[anchor=west]{$a_2$};
\draw[->] (O) -- (0,0,1.3); %node[anchor=south]{$a_3$};

\filldraw[thick,color=myedge,fill=mytriangle] (a) node[black,anchor=north]{$e_1$} -- (b) node[black] {\rule{0pt}{15pt}$\;\;\;e_2$} -- (c) node[black,anchor=east]{$e_3$} -- (a);

\draw[dashed] (0,0.28,0) -- (0,0.98,0);
\draw (O) -- (0,0.26,0);

\end{tikzpicture}
&
\tdplotsetmaincoords{155}{110}
\begin{tikzpicture}[tdplot_main_coords,baseline=(current bounding box.center)]

\coordinate[draw=none,shape=circle,fill, inner sep=1pt] (A) at (0,0,0);
\coordinate[draw=none,shape=circle,fill, inner sep=1pt] (B) at (3,3,0);
\coordinate[draw=none,shape=circle,fill, inner sep=1pt] (C) at (0,3,3);
\coordinate[draw=none,shape=circle,fill, inner sep=1pt] (D) at (3,0,3);

\draw[fill=green!30,opacity=0.2] (A.center) -- (D.center) -- (C.center) -- (A.center);
\draw[fill=red!30,opacity=0.2] (B.center) -- (D.center) -- (C.center) -- (B.center);

\path[semithick]
		(A) edge[dashed,gray] (B)
		(C) edge (D)
		(A) edge[color=green!50!black] (D)
		(B) edge (D)
		(B) edge[color=red!80!black] (C)
		(A) edge (C);

\end{tikzpicture}
\\
(a) State space of $\C\oplus M_2(\C)$.
&
(b) State space of $\C^3$.
&
(c) $\mathcal{S}(\C^2)\star\mathcal{S}(\C^2)$.
\end{tabular}
\endgroup
\vspace*{-5pt}
\caption{}\label{fig:cone}
\end{figure}

\subsection{The standard simplex}
Let $A$ be the commutative C*-algebra of $n\times n$ diagonal complex matrices, that we identify with $\C^n$.
In this case, the entries of a density matrix form a probability distribution on $n$ points (as expected: $A$ is isometrically $*$-isomorphic to the C*-algebra of functions on $n$ points). We can identify $\mathbb{D}(A)$ with the standard $(n-1)$-simplex in $\R^n$ and $\mathbb{P}(A)$ with the basis vectors of the canonical basis of $\R^n$ (the vertices of the simplex). An illustration is in Figure \ref{fig:cone}b. The maximally mixed state corresponds to the barycenter; pure states are a measure zero subset of the boundary.

Since $A$ is commutative, the adjoint action of $U(A)$ is trivial and there is a unique orbit type, the whole simplex.

On the other hand the stratification by rank consists of $n$ strata: the vertices (pure states) form the stratum of rank $1$, the interior is the stratum of rank $n$. For $1\leq k\leq n$, the stratum of rank $k$ consists of probability distributions with $k$ non-zero entries: this is the union of the interior of all $(k-1)$-faces of the simplex.

We can use this example to see Prop.~\ref{prop:11} at work. Let $A=A_1\oplus A_2$ with $A_1=A_2=\C^2$. The state space of both algebras is homeomorphic to a closed interval, say the green and red edges in Figure \ref{fig:cone}c. The end-points correspond to pure states (rank $1$), the interior of the coloured segments to mixed states (rank $2$). In the notations of Prop.~\ref{prop:11}, $X:=\mathcal{S}(A_1)=R_1\cup R_2$ and $Y:=\mathcal{S}(A_2)=S_1\cup S_2$ where $R_1$ are the vertices of the green edge, $R_2$ the interior, $S_1$ are the vertices of the red edge, $S_2$ the interior. The induced decomposition of the join has pieces:
\setlength{\leftmargini}{2em} %itemize/quote left margin
\begin{itemize}
\item $R_1,S_1$: the rank $1$ density matrices in $\C^4$;
\item $R_2,S_2$ (edges), $R_1\times S_2\times (0,1),R_2\times S_1\times (0,1)$ (each one union of two edges): their points correspond to density matrices of rank $2$ in $\C^4$;
\item $R_2\times S_2\times (0,1)$: the interior of the tetrahedron, giving density matrices of rank $3$.
\end{itemize}
As expected, the decomposition induced by the join gives the stratification into density matrices of fixed rank.

\section{On the local structure of density matrices}\label{sec:MnC}

Our aim in  this section is to investigate the structure of stratified space of $\mathcal{S}(M_n(\C))$. As explained above, it can be identified with the space of self-adjoint matrices with trace 1,  in any
complex vector space $E$   equipped with a positive definite Hermitian metric and such that $\dim E=n$:
$$\mathcal{S}(M_n(\C)) =\{f\in \Hb (E) \, \mid \,\, f\geq 0, \,\, \tr(f)=1 \}$$ 
Here $\Hb (E)$ denotes the subspace of  $\End $ containing Hermitian endomorphisms of $E$. It is very well  known (compare e.g.~with \cite[Sect.~1.7]{GMP}), that this space  is equipped with a structure of stratified space. We are interested in giving a closer look to its local structure. What we are going to do is to  provide not only an explicit description of all the strata but also of the conic structure around each point of every stratum. In other words, we carefully describe the structure of topological pseudo-manifold of $\mathcal{S}(M_n(\C))$.
In fact, it will turn out that our space satisfies much stronger conditions since the \emph{strata are smooth manifolds and the local homeomorphisms providing the conic structure are smooth in every stratum}.
Specifically, what we give here is
\begin{itemize}
\item a description of each stratum as \emph{the total space of a suitable smooth  bundle on an algebraic smooth variety} (compare with Proposition \ref{strata} and Remark \ref{rmkstrata}),
\item a description of a neighborhood of each stratum as a \emph{cone bundle over the  stratum, which is contained in the total space of a smooth vector bundle on an algebraic smooth variety} (compare with Theorem \ref{Main} and Remark \ref{main}). 	
\end{itemize}

Let us fix some standard notations.

\begin{notations}~
\label{not}
\begin{itemize}
\item For any Hermitian vector space $E$ we denote by $\Hb (E)$ (resp.~$\hp (E)$, $\hpd (E)$) the subspace  of $\End $ containing Hermitian (resp.~{positive} Hermitian, {positive definite} Hermitian) endomorphism of $E$. Observe that $\hp (E)$ ($\hpd (E)$) is closed (open) in~$\Hb(E)$.
\item For any subspace $X\subseteq \End$, we denote by $X_1$ the intersection of $X$ with the hyperplane of elements with trace 1 (so for instance, $\hpu$ stands for the space of positive Hermitian elements with trace 1). Observe that $\hpdu$ is closed in $\hpd$ and open in $\hpu$.
\item We set  	$\Sigma _i :=\{f\in \hpu(E) \, \mid \,\, \rk(f)\leq i \}$ and $\Sigma _i^0 :=\{f\in \hpu(E) \, \mid \,\, \rk(f)= i \}$.
\end{itemize}
\noindent More generally, we consider a Hermitian vector bundle $\ec \stackrel{\pi}{\rightarrow} M$   on a manifold $M$ and fix similar notations.
\begin{itemize}
\item  We denote by $\Hb (\ec)$ (resp.~$\hp (\ec)$, $\hpd (\ec)$) the subbundle of  $\Ends(\ec)$ containing Hermitian (resp.~{positive} Hermitian, {positive definite} Hermitian) endomorphism of $\ec$. By definition, we have $$ \Hb (\ec)_m=\Hb (\ec_m),\,\,\, \hp (\ec)_m=\hp (\ec_m), \,\,\, \hpd (\ec)_m=\hpd (\ec_m),\,\,  \,\,\, \forall m\in M.$$
\item Similarly as above, the lower index 1 stands for {elements with trace 1} and $\Sigma_i (\ec)$, and $\Sigma_i^0 (\ec)$, are defined in such a way that $$ \Sigma_i (\ec)_m=\Sigma_i (\ec_m),\,\,\, \Sigma_i^0 (\ec)_m=\Sigma_i^0 (\ec_m), \,\,\, \forall m\in M.$$
\end{itemize}
\end{notations}

Observe that in our new notations we have $\mathcal{S}(M_n(\C))=\hpu(E)$.
Our result will follow from a slightly more general one concerning the stratification of the space $\hpu(\ec)$.

Our first task is to provide each $\Sigma^0_i(\ec)$ with a structure of smooth manifold.
This will be accomplished in Lemma \ref{submanifold} and Proposition \ref{strata}.

\smallskip

 Denote by
$\Gi$ the \emph{Grassmann manifold of $(n-i)$-dimensional subspaces of} $E$. It is well known that $\Gi$ is a compact complex manifold of dimension $i (n-i)$, equipped with a short exact sequence of holomorphic vector bundles
\begin{equation}
\label{segrass}
0\rightarrow S_{\Gi}\rightarrow E_{\Gi}\rightarrow Q_{\Gi}\rightarrow 0,	
\end{equation}
where $S_{\Gi}$ denotes the \emph{tautological vector bundle}, 
$Q_{\Gi}$ denotes the \emph{universal quotient  bundle} and $E_{\Gi}$ denotes the \emph{trivial vector  bundle} on $\Gi$ (for any (n-i)-dimensional subspace $W\subseteq E$, we have $S_{\Gi, W}=W$). Of course, as a smooth  vector bundle on $\Gi$  can be identified with
$S_{\Gi}^{\perp}$ the orthogonal bundle of $S_{\Gi}$ in $E_{\Gi}$.

More generally, given a vector bundle on some manifold $\ec \stackrel{\pi}{\rightarrow} M$, we denote by $\Gi (\ec)$
the \emph{Grassmannian of subspaces of dimension (n-i) of the bundle}   $\ec $:
$$ \Gi (\ec)_m= \Gi (\ec_m)\, \,\,\,\, \forall m\in M.$$
The total space of $\Gi(\ec)$ is equipped with a short exact sequence of vector bundles as \eqref{segrass}.

\medskip
\begin{lemma}\label{submanifold}
The stratum 	$\Sigma _i^0(\ec)$ is a (locally closed) submanifold of $\Ends(\ec)$.	
\end{lemma}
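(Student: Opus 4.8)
The plan is to reduce everything to a single fibre and there to realise the stratum, around each of its points, as the regular zero set of a Schur-complement map. First I would trivialise: since $\ec$ is a Hermitian bundle it carries, over a neighbourhood $U$ of any point of $M$, a smooth \emph{unitary} frame (Gram--Schmidt applied to a smooth frame), and the induced isomorphism $\Ends(\ec)|_U\cong U\times\End$ identifies $\Sigma_i^0(\ec)|_U$ with $U\times\Sigma_i^0(E)$, because the transition maps act by $f\mapsto ufu^{*}$ and thus preserve rank, positivity and trace. As being an embedded submanifold is a local property and these open sets cover the total space $\Ends(\ec)$, it suffices to show that $\Sigma_i^0(E)$ is a locally closed submanifold of $\End$; and since $\Sigma_i^0(E)\subset\Hb(E)$, which is a real-linear (hence closed, embedded) submanifold of $\End$, it is enough to work inside $\Hb(E)$.

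Local closedness is immediate: $\rk f\leq i$ is the vanishing of all $(i+1)\times(i+1)$ minors, while trace $1$ and positive semidefiniteness are closed conditions, so $\Sigma_i$ and $\Sigma_{i-1}$ are closed and $\Sigma_i^0=\Sigma_i\smallsetminus\Sigma_{i-1}$ is locally closed. For the submanifold property I would argue pointwise. Fix $f_0\in\Sigma_i^0(E)$, split $E=V_0\oplus W_0$ orthogonally with $V_0:=\mathrm{im}\,f_0$ ($\dim V_0=i$) and $W_0:=\Ker f_0$, and write a Hermitian $f$ in blocks $f=\left(\begin{smallmatrix} a & b\\ b^{*} & d\end{smallmatrix}\right)$ adapted to this splitting. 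The block $a_0=f_0|_{V_0}$ is positive definite, so the set $\Omega\subset\Hb(E)$ where $a>0$ is an open neighbourhood of $f_0$. On $\Omega$ a block congruence by $\left(\begin{smallmatrix} I & 0\\ b^{*}a^{-1} & I\end{smallmatrix}\right)$ gives $\rk f=i+\rk\!\big(d-b^{*}a^{-1}b\big)$, so for the Schur complement $S\colon\Omega\to\Hb(W_0)$, $S(f):=d-b^{*}a^{-1}b$, one has $\rk f=i\iff S(f)=0$, and when $S(f)=0$ the same congruence shows $f\geq0$ \emph{automatically}. Differentiating $S$ in the $d$-block alone yields $\mathrm{d}S=\id$ on $\Hb(W_0)$, so $S$ is a submersion and $S^{-1}(0)=\{f\in\Omega:\ \rk f=i,\ f\geq0\}$ is an embedded submanifold of $\Omega$ of real dimension $n^{2}-(n-i)^{2}=i(2n-i)$. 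Letting $f_0$ range over $\Sigma_i^0(E)$ and invoking locality, the set of positive Hermitian rank-$i$ endomorphisms is an embedded submanifold of $\Hb(E)$.

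It remains to cut with the trace-$1$ hyperplane. For any positive $f$ of rank $i\geq1$ the radial curve $t\mapsto(1+t)f$ stays positive of rank $i$, and its velocity $f$ at $t=0$ satisfies $\tr f=1\neq0$; hence $\tr$ restricts to a submersion on the rank-$i$ locus and $\{\tr=1\}$ is transverse to it. Therefore $\Sigma_i^0(E)$ is an embedded submanifold of codimension $1$, of dimension $i(2n-i)-1$, which settles the fibre case and hence the lemma. I would remark that the chart above is exactly a local trivialisation of the bundle $f\mapsto\Ker f\in\Gi$ with fibre $\hpdu$ of rank $i$, so this computation is precisely the local content refined into the global description of Proposition \ref{strata}.

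The main obstacle is the Schur-complement step: the rank condition is a priori only closed (it is defined by minors), and the point is to recognise that, once one restricts to the open set $\Omega$ where the $V_0$-block is invertible, it becomes the \emph{regular} zero set of the smooth map $S$, with positivity coming for free from the congruence. Everything else --- the unitary trivialisation, the local-closedness bookkeeping, and the transversality with the trace --- is routine.
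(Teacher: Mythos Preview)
Your argument is correct and takes a genuinely different route from the paper. The paper reduces to a fibre just as you do, but then proves the submanifold property group-theoretically: it first recalls that the locus $\End_i$ of rank-exactly-$i$ endomorphisms is a Schubert cell in $\G_n(E\oplus E)$, hence smooth, and then passes through the frame bundle $\mathrm{Fr}(E)\times M_n(\C)\to\End$ --- a principal $U(n)$-bundle --- restricting it to block matrices $\left(\begin{smallmatrix}0&0\\0&R\end{smallmatrix}\right)$ with $R$ invertible of size $i$. The preimage of $\Sigma_i^0$ is obtained by imposing $R\in\hpdu(\C^i)$, which is manifestly open in an affine subspace, and the principal-bundle quotient pushes smoothness down.

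Your approach replaces this by a direct regular-value argument: after fixing the splitting $E=V_0\oplus W_0$ adapted to $f_0$, the Schur complement $S(f)=d-b^{*}a^{-1}b$ cuts the rank-$i$ locus out of the open set $\{a>0\}$ as $S^{-1}(0)$, submersivity is obvious in the $d$-direction, and positivity is automatic by congruence. This is more elementary --- no Schubert cells, no principal bundles --- and it also hands you the dimension $i(2n-i)-1$ for free. The paper's argument, on the other hand, makes the link with the unitary action and the orbit-type picture explicit, which is thematically closer to the rest of Section~4. One small wording slip: in your trace-transversality step you write ``$\tr f=1\neq 0$'' for an arbitrary positive rank-$i$ element $f$; what you need (and what holds) is $\tr f>0$, but since transversality is only required at points of $\Sigma_i^0$ this is harmless.
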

\begin{proof}
As the statement is local, we are going to prove that 	$\Sigma _i^0$ is a  submanifold of $\Ends(E)$. First of all, we observe that $\hpdu$ is a locally closed submanifold of $\Ends(E)$, since it is open in a real affine subspace of $\Ends(E)$.
Furthermore, we recall that the subspace
$$\End _i:=\{f\in \End \mid \,\, \rk f=i \}\subseteq \End,  $$
is a smooth locally closed submanifold of $\End$. Indeed, by sending each $f\in \End$ to its graph, the space $\End$ can be identified with an open set in the Grassmannian $\G _n(E\oplus E)$. By means of this identification $\End _i$ becomes a Schubert
cell, which is known to be a smooth locally closed submanifold  of $\G_n( E\oplus E)$ (in fact it is  equipped with a structure of affine subspace \cite[Proposition 3.2.3]{Manivel}).

Consider $\mathrm{Fr}(E)$, the manifold of orthonormal frames of $E$ and the map
$$\psi: \mathrm{Fr}(E)\times M_n(\C)\rightarrow \End, $$
such that $\psi(B,A)$ is the operator defined by the matrix $A$ in the frame $B$. Via the map $\psi$,  $\mathrm{Fr}(E)\times M_n(\C)$ is a principal $U(n)$-bundle over $\End$. 

If we restrict the principal fibration $\psi$ to the submanifold 
$$\mathrm{Fr}(E)\times \pc \subset \mathrm{Fr}(E)\times M_n(\C),$$
\begin{equation}\label{dfnA}
A\in \pc \quad
\Leftrightarrow  \quad A=\left(\begin{matrix} 0 &  0 \\ 
0 & R\\ 
\end{matrix}\right), \,\,\,R\in M_i(\C),\,\,\, \rk R=i,
\end{equation}
then the restriction 
$$\Psi: \mathrm{Fr}(E)\times \pc \rightarrow \End _i$$
is surjective because any $f\in \End _i$ can be represented by a matrix
like in \eqref{dfnA} in some orthonormal frame. The morphism $\Psi $ is a $U(n-i) \times U (i)$-principal bundle over $\End _i$ (in fact it is a sub $(U (n-i) \times U (i))$-principal bundle of $\psi^{-1}(\End _i)$).

In order to prove the lemma it suffices to observe that $\Psi ^{-1}(\Sigma _i^0)$ is smooth and locally closed in $\mathrm{Fr}(E)\times \pc$.
This follows from our starting remark,
 since the subspace $\Psi ^{-1}(\Sigma _i^0)\subset \mathrm{Fr}(E)\times \pc$ is obtained by imposing
 $$R\in \hpdu (\C ^i)$$
 in the definition of $\pc$.
\end{proof}

\begin{prop}\label{strata}
The stratum 	$\Sigma _i^0(\ec)$ is diffeomorphic to the total space of $\hpdu (\sgpe)$.
\end{prop}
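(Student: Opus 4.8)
The plan is to produce an explicit map and then verify its regularity by reusing the frame bundle of Lemma \ref{submanifold}. As in that lemma the statement is local over $M$, so it is enough to build a diffeomorphism between $\Sigma_i^0$ and the total space of $\hpdu(\sgp)$ over the Grassmannian $\Gi$, the relative version over $M$ following from local triviality of $\ec$. A point of the total space of $\hpdu(\sgp)$ is a pair $(W,g)$ with $W\in\Gi$ an $(n-i)$-dimensional subspace of $E$ and $g\in\hpdu(W^\perp)$ a positive definite trace-one Hermitian endomorphism of the fibre $W^\perp$ of $\sgp$ over $W$. I would set
$$
\Phi(W,g):=\iota_W\circ g\circ p_W\in\Hb(E),
$$
where $p_W\colon E\to W^\perp$ is the orthogonal projection and $\iota_W\colon W^\perp\hookrightarrow E$ the inclusion; thus $\Phi(W,g)$ acts as $g$ on $W^\perp$ and as $0$ on $W$.

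To see that $\Phi$ is smooth I would factor it as the bundle embedding $\Ends(\sgp)\hookrightarrow\Ends(E_{\Gi})=\Gi\times\Ends(E)$, obtained by extending an endomorphism of $\sgp$ by zero on $S_{\Gi}$, followed by the projection $\Gi\times\Ends(E)\to\Ends(E)$; both maps are smooth and $\hpdu(\sgp)$ is a smooth subbundle of $\Ends(\sgp)$. Its image lies in $\Sigma_i^0$, since $\Phi(W,g)$ is positive Hermitian (block diagonal with blocks $g>0$ and $0$), has trace $\tr(g)=1$, and has rank exactly $i$ because $g$ is invertible on $W^\perp$. The map is bijective: it is injective because $f=\Phi(W,g)$ recovers $W=\Ker f$ (invertibility of $g$ forces $\Ker f=W$) and then $g=f|_{W^\perp}$; and it is surjective because for $f\in\Sigma_i^0$ one sets $W:=\Ker f\in\Gi$, notes that $f=f^*$ gives $\mathrm{im}\,f=(\Ker f)^\perp=W^\perp$, so that $W^\perp$ is $f$-invariant and $g:=f|_{W^\perp}\in\hpdu(W^\perp)$ satisfies $\Phi(W,g)=f$.

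The main obstacle is to upgrade this smooth bijection to a diffeomorphism, i.e.~to know that $f\mapsto(\Ker f,f|_{(\Ker f)^\perp})$ is smooth. Here I would exploit the principal bundle $\Psi\colon\mathrm{Fr}(E)\times\pc\to\End_i$ of Lemma \ref{submanifold}: restricting it over $\Sigma_i^0$ exhibits $\mathrm{Fr}(E)\times\hpdu(\C^i)$ (with $R$ the lower right block) as a $U(n-i)\times U(i)$-principal bundle over $\Sigma_i^0$, while the same frame bundle $\mathrm{Fr}(E)\to\Gi$, sending a frame to the span of its first $n-i$ vectors, presents the total space of $\hpdu(\sgp)$ as the associated bundle $\mathrm{Fr}(E)\times_{U(n-i)\times U(i)}\hpdu(\C^i)$, with $U(i)$ acting by conjugation and $U(n-i)$ trivially. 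As the two actions coincide, $\Phi$ is precisely the induced isomorphism of associated bundles, hence a diffeomorphism. Alternatively one could argue by dimension: the total space of $\hpdu(\sgp)$ and $\Sigma_i^0$ both have real dimension $2i(n-i)+(i^2-1)$, and differentiating $\Phi$ — the only delicate point being the derivative of $p_W$ as $W$ varies, which is harmless because $g$ is invertible — shows that $\Phi$ is an immersion, hence a local diffeomorphism by equality of dimensions and therefore, being bijective, a diffeomorphism. Performing the whole construction fibrewise over $M$ then yields the stated diffeomorphism between $\Sigma_i^0(\ec)$ and the total space of $\hpdu(\sgpe)$.
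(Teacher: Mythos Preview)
Your argument is correct and essentially constructs the same pair of inverse maps as the paper, but you establish smoothness of the inverse $f\mapsto(\Ker f,\,f|_{(\Ker f)^\perp})$ by a genuinely different mechanism. The paper proves this analytically: it writes the kernel projector as a contour integral of the resolvent, $\Ker g=\Im\bigl(\int_\gamma R_g(z)\,dz\bigr)$ for a small circle $\gamma$ around $0$, which is manifestly smooth in $g$. Your route is more synthetic: you observe that both $\Sigma_i^0$ and the total space of $\hpdu(\sgp)$ arise as the quotient of the \emph{same} space $\mathrm{Fr}(E)\times\hpdu(\C^i)$ by the \emph{same} free $U(n-i)\times U(i)$-action, with $\Phi$ descending from the identity upstairs, so $\Phi$ is a diffeomorphism of quotients. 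This is clean and nicely recycles Lemma~\ref{submanifold}. The trade-off is that the paper's resolvent computation, while heavier here, is exactly the tool reused in the proof of Theorem~\ref{Main} to build the local conic trivialisation; your approach avoids introducing that machinery but does not set it up for later use. Your alternative immersion-plus-dimension argument is also valid in principle, though as written it is more of a sketch than a proof.
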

\begin{proof}
A point in the stratum $\Sigma _i^0(\ec)$ is represented by a pair 
$$(f_m,m)\mid\,\,\, f_m\in \hpu (\ec_m) \subset\Ends\ec _m \,\,\, \text{and} \,\,\, \dim \Ker f_m=n-i.$$
In other words, $\Ker f_m$ belongs to $\Gi(\ec_m)$ and the restriction of $f_m$ to the orthogonal of $\Ker f_m$ is Hermitian positive definite in $\Ends (\ec_m)$:
$$f_m\mid _{\Ker f_m ^{\perp}}\in \hpdu (\Ker f_m ^{\perp}).$$ 
 On the other hand, a point in the total space of $\Gi(\ec)$ is a pair $(W_m, m)$ such that $W_m\in \Gi(\ec_m)$, hence a point of the total space of the bundle $\hpdu (\sgpe)$ is represented by a triple
$$(g_m, W_m,m)\mid\,\,\, W_m\in \Gi (\ec_m) \,\,\, \text{and} \,\,\, g_m\in \hpdu(W _m^{\perp}).$$
Thus, the following map is well defined 
\begin{equation}\label{first}
\Sigma _i^0(\ec) \rightarrow  \hpdu (\sgpe), \,\,\,\, (f_m,m) \rightarrow (f_m\mid _{\Ker f_m ^{\perp}}, \Ker f_m, m).	
\end{equation}
Now we  define a map going the other way around. For any vector-subspace 	$V\subseteq E$ of a Hermitian  vector spce $E$, we denote by $\pi_V$ the \emph{orthogonal projection} over $V$:
$\pi_V: E\rightarrow V$. 

The following differentiable map is well defined
$$\hpdu (\sgpe) \rightarrow  \Sigma_i^0(\ec), \,\,\,\, (g_m, W_m, m)\rightarrow (g_m\circ \pi_{W_m^{\perp}},m),$$
and obviously the two maps  are inverse of each other.	

We conclude the proof by adding few words in order to convince the reader that the map \eqref{first} is indeed differentiable. Of course, the only doubt may concern the second component of \eqref{first}.
For any $g\in \End $, we denote by $\spec (g)$  the set of eigenvalues of $g$ and by
$$ R_g: \mathbb{C} \backslash \spec (g) \rightarrow \End,\,\,\,\, R_g(z):= \frac{1}{2\pi i(zI-g)}, $$
the \emph{resolvent function} of $g$.
Fix a neighborhood $U$ of a pair $(f_m, m)\in  \Sigma_i^0(\ec)$ and choose a neighborhood $U_m$ of $m$ such that $\ec \mid_{U_m}$ admits a trivialization $\ec \mid_{U_m}\cong U_m\times E$. Set
$$ a:= \min \{ \lambda \in \spec  f_m, \,\,\, \lambda \not=0 \}>0,$$ and fix $\epsilon$, s.t. $a>\epsilon >0$.
We can choose $U$ and $U_m$ in such a way that $U\subseteq V\times U_m$ for some open set $V$ of $\Ends (E)$. Thus, for any $(g_x,x)\in U$, we can view $g_x$ as belonging to $\Ends (E)$. Furthermore, we may assume $U$ small enough so as
$$\forall (g_x,x)\in U, \,\,\, 0\not=\lambda \in \spec g_x \,\,\, \Rightarrow \,\, \,\, \lambda >a-\epsilon.$$
Consider now a circle $\gamma\subset \mathbb{C}$ around $0$ and contained in a disk in $\mathbb{C}$, centered in 0 and with radius
 smaller than $a-\epsilon $. As long as $(g_x,x)$ moves in $U$, we have 
 $$\Im \left( \int _{\gamma } R _{g _x}(z) dz \right)=\Im \left( \int _{\gamma } \frac{1}{2\pi i(zI-g)} dz \right)= \Ker g_x    \in \Gi (E)$$
 (here $\Im(f)$ denotes the image of the operator $f$).
Finally, the second component of \ref{first} in $U$ is the map
$$U \rightarrow \Gi (E), \,\,\,\, (g_x,x) \rightarrow \Im \left(\int _{\gamma } R _{g_x}(z) dz \right),$$
and no doubt it is smooth.
\end{proof}

\begin{rem}\label{rmkstrata}
The bundle $\Hb _1(\sgpe)$ is a sub-vector bundle of $\Hb (\sgpe)$.  The fibres of the former are affine hyperplane in the fibres of the latter. Since $\hpdu(\sgpe)$ is open in $\Hb _1(\sgpe)$, Proposition \ref{strata} shows that 
$\Sigma _i^0(\ec)$ is diffeomorphic to a bundle on $\Gi (\ec)$ with fibres that are open sets in the vector bundle $\Hb _1(\sgpe)$. In particular, when $M$ is a point we have that $\Sigma _i^0$ is diffeomorphic to the total space of an open sub-bundle of $\Hb _1(\sgp)$,  that is to a smooth  bundle   on    an algebraic smooth variety.
\end{rem}

Now we come to the task of investigating the conic structure around the points of $\hpu(\ec)$. We start with some further notation.

As noted above,  $\Hb_1 (S_{\G_i(\ec)})$ is an \emph{affine hyperplane bundle} of $\Hb (S_{\G_i(\ec)})$. Observe that the fibres of $\Hb_1 (S_{\G_i(\ec)})$ do not contain the zero vector. Furthermore,  $\hpu (S_{\G_i(\ec)})$ is a subbundle of $\Hb_1 (S_{\G_i(\ec)})$, hence  its fibres do not contain the zero vector as well.

\begin{notations}
We denote by $\cc (S_{\G_i(\ec)})\subset \Hb (S_{\G_i(\ec)})$	the sub cone bundle of $\Hb (S_{\G_i(\ec)})$ whose fibres are the cones with vertex 0 and base the fibres of $\hpu (S_{\G_i(\ec)})$:
$$\cc (S_{\G_i(\ec)})_{(V_m, m)}=\cc(\hpu (S_{\G_i(\ec)})_{(V_m, m)})\subset \Hb (S_{\G_i(\ec)})_{(V_m, m)},\,\, \,\, V_m\in \Gi(\ec_m),$$
where $\cc(\hpu (S_{\G_i(\ec)})_{(V_m, m)})$ denotes the cone with vertex 0 and base  $\hpu (S_{\G_i(\ec)})_{(V_m, m)}$ in $\Hb (S_{\G_i(\ec)})_{(V_m, m)}$.
\end{notations}

\begin{thm}
\label{Main} There is a neighborhood of $\Sigma _i^0(\ec)$ in $\hpu(\ec)$ which is isomorphic, as a stratified space, to 
a neighborhood of $\hpdu (\sgpe)$ inside the total space of $\cc (S_{\G_i(\ec)})\oplus \hpdu (\sgpe)\subset \Hb (S_{\G_i(\ec)})\oplus \hpdu (\sgpe)$.
\end{thm}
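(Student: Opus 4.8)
The plan is to make precise the idea that a density matrix $f$ lying near the rank-$i$ stratum has its spectrum split into $i$ \emph{large} eigenvalues and $n-i$ \emph{small} ones, and that the small eigenvalues record exactly the transverse (conical) directions. First I would fix $(f_m,m)\in\Sigma_i^0(\ec)$ and set $a:=\min\{\lambda\in\spec f_m:\lambda\neq 0\}>0$, as in the proof of Proposition \ref{strata}. By lower semicontinuity of the rank, every $f$ in a small enough neighbourhood of $(f_m,m)$ in $\hpu(\ec)$ has rank $\geq i$, and its spectrum decomposes into $n-i$ eigenvalues close to $0$ and $i$ eigenvalues larger than $a-\epsilon$. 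Choosing a circle $\gamma$ around $0$ of radius between these two clusters and integrating the resolvent exactly as in Proposition \ref{strata},
$$
V(f):=\Im\Big(\int_\gamma R_f(z)\,dz\Big)\in\Gi(\ec)
$$
is a well-defined $(n-i)$-dimensional subspace --- the spectral subspace of the small eigenvalues, an \emph{approximate kernel} --- depending smoothly on $f$ over a neighbourhood $U$ of $\Sigma_i^0(\ec)$. Since $V(f)$ and $V(f)^\perp$ are both $f$-invariant (being spectral subspaces), $f$ is block diagonal with respect to $E=V(f)^\perp\oplus V(f)$: the block $f|_{V(f)^\perp}\in\hpd(V(f)^\perp)$ is positive definite (large eigenvalues) and $f|_{V(f)}\in\hp(V(f))$ is positive semidefinite and small.

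With this in hand I would define the trivialising map as follows. Writing $S:=f|_{V(f)}$ and $t:=\tr S<1$, send
$$
f\;\longmapsto\;\Big(\,S,\;\tfrac{1}{1-t}\,f|_{V(f)^\perp},\;V(f)\,\Big),
$$
a point of $\cc(S_{\Gi(\ec)})\oplus\hpdu(\sgpe)$ over $\Gi(\ec)$: the first entry $S$ lies in the positive-semidefinite cone of $V(f)$, which is precisely the fibre of $\cc(S_{\Gi(\ec)})$ (the cone with vertex $0$ and radial coordinate $\tr$ over $\hpu(V(f))$), while the rescaled second entry has trace $1$ and is positive definite, hence lies in $\hpdu(\sgpe)$. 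The inverse sends a triple $(S,g,V)$ with $S$ in the cone fibre over $V$, $\tr S<1$, and $g\in\hpdu(V^\perp)$ to the block-diagonal operator
$$
(1-\tr S)\,g\circ\pi_{V^\perp}\;+\;S\circ\pi_{V}\in\Hb(E).
$$
A direct computation gives trace $1$ and positivity, so this lands in $\hpu(\ec)$; the two maps are manifestly mutually inverse, and both are smooth because $V(\cdot)$ and the restriction/rescaling operations are smooth in families (the resolvent argument of Proposition \ref{strata}). Thus the two neighbourhoods are identified by an isomorphism of stratified spaces, with $\Sigma_i^0(\ec)=\{S=0\}$ corresponding to $\hpdu(\sgpe)$, the zero section of the cone factor, consistently with Proposition \ref{strata}.

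It remains to check that this identification preserves the stratifications. Since the block $(1-\tr S)g$ is positive definite on the $i$-dimensional $V^\perp$, the rank of the reconstructed $f$ equals $i+\rk(S)$; hence $f\in\Sigma_{i+j}^0(\ec)$ if and only if $\rk(S)=j$. On the target side the stratification of the cone bundle $\cc(S_{\Gi(\ec)})$ is the conical (fibrewise rank) stratification, whose $j$-th stratum consists of the positive-semidefinite operators of rank $j$ on $V$ (the open cone over the trace-$1$ ones), while the $\hpdu(\sgpe)$ factor is a single stratum; the Whitney sum is therefore stratified by $\rk(S)$, and the map matches these pieces one by one.

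The main obstacle I expect is not any single computation but assembling the spectral construction into a \emph{smooth, stratification-preserving} identification uniformly over the family: one must verify that $\gamma$ and the resulting $V(f)$ can be chosen on a genuine open neighbourhood of the (non-closed) stratum $\Sigma_i^0(\ec)$, that the off-diagonal blocks vanish \emph{exactly} (which is why one uses the true spectral subspace of the small eigenvalues, not the fixed $\Ker f_m$), and that the rescaling by $(1-\tr S)^{-1}$ interacts correctly with the cone coordinate so that the rank strata on both sides correspond. Everything else --- positivity via the block form, $\tr=1$, and the inverse relations --- is routine.
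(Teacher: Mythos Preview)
Your proposal is correct and follows essentially the same route as the paper: the paper also fixes $f\in\Sigma_i^0$, separates the spectrum into small and large clusters, and uses resolvent integrals $\int_{\gamma_1} zR_g(z)\,dz$ and $(1-\alpha)^{-1}\int_{\gamma_2} zR_g(z)\,dz$ (with $\alpha$ the trace of the first) to write down exactly your map $f\mapsto(S,\tfrac{1}{1-t}f|_{V(f)^\perp},V(f))$ and its inverse $(h,k)\mapsto h\circ\pi_W+(1-\tr h)k\circ\pi_{W^\perp}$. Your explicit verification that $\rk f=i+\rk S$ matches the strata is a welcome addition---the paper merely says this is checked ``by inspection'' and frames the cone-fibre stratification via an induction on $\dim E$ that your direct rank argument renders unnecessary.
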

\begin{proof}
Since the argument is purely local in the manifold $M$, in what follows we drop the reference to the point $m$ varying in $M$.
We argue by induction on the dimension of our Hermitian vector space $E$ and continue to use the notations introduced in Proposition \ref{strata}. By induction, we may assume that 	the fibres of $\cc(S_{\G_i})\oplus \hpdu (\sgp)$ are stratified spaces with lower dimension. Fix a Hermitian $f\in \Sigma_i^0$. We are going to construct an isomorphism of stratified spaces between a suitable neighborhood of $f$ in $\hpu(E)$ with a neighborhood of $(f\mid _{\Ker f^{\perp}}, \Ker f)$ in the total space of the fibre bundle
$\cc (S_{\G_i})\oplus \hpdu (\sgp)$.
Set
$$ a:= \min \{ \lambda \in \spec  f, \,\,\, \lambda \not=0 \}>0,$$ and fix $a>\epsilon >0$.
We are allowed to choose a neighborhood $U$ of $f$ in $ \hpu(E)$  in such a way any Hermitian $g\in U$ satisfies
$$\lambda \in \spec g \,\,\, \Rightarrow \,\,\, 0\leq \lambda <\epsilon \,\, \vee \,\, \lambda >a-\epsilon.$$
Consider now a circle $\gamma_1\subset \mathbb{C}$ around $0$ and a closed path $$\gamma _2\subset \{z\in \mathbb{C} \mid \,\,\, Rez>a-\epsilon \}$$
 and surrounding $\spec f$. 
We define a differentiable map
$$F: U \rightarrow \left( \cc(\Sc(S_{\G_i}))\oplus \hpdu (\sgp) \right),$$
$$F(g):= (\int _{\gamma_1} zR_g(z) dz, (1-\alpha)^{-1}\int _{\gamma_2} zR_g(z) dz), \,\,\,\, \alpha:= \tr \int _{\gamma_1} zR_g(z) dz. $$
We have
$$\int _{\gamma_1}zR_g(z) dz\in \hp(S_{\G_i, W}), \,\,\, (1-\alpha)^{-1}\int _{\gamma_2} zR_g(z) dz\in \hpdu (\sgp)_W,$$
where $W:= \Im (\int _{\gamma_1}R_g(z) dz).$
Furthermore, we have $$\alpha ^{-1}\int _{\gamma_1}zR_g(z) dz\in \hpu(S_{\G_i, W}), \,\,\, \text{if} \,\,\, \alpha\not=0.$$
By inspection, we see that the morphism above is differentiable in each stratum.
An explicit inverse of $F$ is provided by
$$(h, k)\in (\Ends (W), \Ends (W^{\perp}))\rightarrow h\circ \pi_W + (1-\tr h)k\circ \pi_{W^{\perp}}, $$
as soon as $\tr(h)\ll 1$.
\end{proof}

\begin{rem}\label{main}
The description given in Theorem \ref{Main} shows that a neighborhood of $\Sigma_i^0(\ec)$ is isomorphic to 
$\cc (S_{\G_i(\ec)})\oplus \hpdu (\sgpe)$ as a stratified space. So, the conic structure around any point of $\Sigma_i^0(\ec)$ is that of a cone bundle over $\Sigma_i^0(\ec)$. In particular, when $M$ is a point we have that a neighborhood of $\Sigma _i^0$ is isomorphic to 
$\cc (S_{\G_i})\oplus \hpdu (\sgp)$, that is a cone bundle  on an algebraic smooth variety.
\end{rem}

\begin{cor}~\label{last}
\begin{itemize}\itemsep=2pt
\item The stratum 	$\Sigma _i^0$ is diffeomorphic to the total space of $\hpdu (\sgp)$.
\item There is a neighborhood of $\Sigma _i^0$ in $\hpu(E)$ which is isomorphic, as a stratified space, to 
a neighborhood of $\hpdu (\sgp)$ inside the total space of $\cc (S_{\G_i})\oplus \hpdu (\sgp)\subset \Hb (S_{\G_i})\oplus \hpdu (\sgp)$.
\end{itemize}	
\end{cor}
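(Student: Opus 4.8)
The plan is to observe that Corollary \ref{last} is nothing but the specialization of Proposition \ref{strata} and Theorem \ref{Main} to the case in which the base manifold $M$ consists of a single point. First I would unwind the notation under this specialization. When $M=\{\mathrm{pt}\}$ is a point, the datum of a Hermitian vector bundle $\ec\stackrel{\pi}{\rightarrow}M$ is the same as the datum of a single Hermitian vector space, which we identify with $E$. Consequently the Grassmannian bundle $\Gi(\ec)$ reduces to the Grassmann manifold $\Gi$ of $(n-i)$-dimensional subspaces of $E$, the tautological subbundle and its orthogonal complement reduce to $S_{\Gi}$ and $\sgp$, so that $\sgpe=\sgp$, $\Sigma_i^0(\ec)=\Sigma_i^0$, and $\hpdu(\sgpe)=\hpdu(\sgp)$. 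Likewise $\hpu(\ec)=\hpu(E)=\mathcal{S}(M_n(\C))$, which is exactly the space we care about.

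With this dictionary in place, I would simply read off the two assertions. The first bullet is the statement of Proposition \ref{strata} over a one-point base: the diffeomorphism \eqref{first} specializes to a diffeomorphism between $\Sigma_i^0$ and the total space of $\hpdu(\sgp)$. The second bullet is the statement of Theorem \ref{Main} over a one-point base: the map $F$ constructed there gives a stratified isomorphism between a neighborhood of $\Sigma_i^0$ in $\hpu(E)$ and a neighborhood of $\hpdu(\sgp)$ inside the total space of $\cc(S_{\G_i})\oplus\hpdu(\sgp)\subset\Hb(S_{\G_i})\oplus\hpdu(\sgp)$. These are precisely the observations already recorded in Remark \ref{rmkstrata} and Remark \ref{main}, so the proof amounts to invoking those remarks.

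The point I would emphasize is that there is essentially no obstacle to overcome here, and no genuine computation is needed: the substance lies entirely in Proposition \ref{strata} and Theorem \ref{Main}, whose proofs are carried out fibrewise in $M$. Indeed Lemma \ref{submanifold} opens with ``as the statement is local'' and Theorem \ref{Main} opens with ``the argument is purely local in the manifold $M$'', so in both cases the constructions and verifications take place in a single fibre $\ec_m$ and then vary smoothly over $M$. Taking $M$ to be a point simply removes the parameter $m$ altogether, leaving the single-fibre construction, which applies verbatim. The only thing that remains is the bookkeeping of the notational reduction $\ec\mapsto E$ described above, and once that identification is made the two bullets are literal transcriptions of the two earlier results; nothing further needs to be checked.
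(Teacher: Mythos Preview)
Your proposal is correct and matches the paper's approach exactly: the paper states Corollary \ref{last} without proof, treating it as the immediate specialization of Proposition \ref{strata} and Theorem \ref{Main} to the case $M=\{\mathrm{pt}\}$, precisely as you describe and as already anticipated in Remarks \ref{rmkstrata} and \ref{main}. There is nothing to add.
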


\section{Closing remarks}\label{sec:infdim}

In this paper we discussed the structure of Whitney stratified space of the state space of a finite-dimensional C*-algebra. A natural question is in what form these results can be generalized to the infinite-dimensional case. While a detailed study is postponed to future works, we collect here a few remarks and point to possible directions of research.

\smallskip

If $A=\mathcal{K}(H)$ is the C*-algebra of compact operators on a separable Hilbert space $H$, the state space can be identified with the subspace of $\mathcal{L}^1(H)$ of positive operators with trace $1$, the so-called \emph{density operators}. In this case, it should be possible to carry on a study similar to the one in Section \ref{sec:MnC}.
Pure states are still given by rank $1$ projections.
The set of projections in a Banach $*$-algebra is an analytic Banach submanifold \cite[Prop.~1.1]{AM}. Identifying rank $1$ projections in $\mathcal{L}^1(H)$ with rays in $H$ one has a realization of $\mathbb{P}(H)$ as an analytic Banach submanifold of $\mathcal{L}^1(H)$.
More generally, the orbit of a density operator under the adjoint action of the Banach-Lie group of unitary endomorphisms of $H$ is an analytic Banach manifold \cite{CIJM}.

For an arbitrary unital C*-algebra $A$, states can be represented by density operators of the enveloping von Neumann algebra.
Let $\pi:A\to\mathcal{B}(H)$ be the universal representation of $A$ (the direct sum of GNS representations of all states of $A$) and $\pi(A)''$ the enveloping von Neumann algebra of $A$ (the double commutant of $\pi(A)$).
Every state of $A$ has a unique extension to a normal state of $\pi(A)''$, i.e.~a state given by $a\mapsto \tr(\rho\hspace{1pt}a)$ for some density operator $\rho\in\mathcal{L}^1(H)$. See \cite[\S3.7--3.8]{Ped}.

\smallskip

One possible direction could be then to look at a generalization of Whitney stratified spaces where smooth manifolds are replaced by analytic Banach manifolds.
A drawback of this approach is that we cannot expect to find a ``nice'' stratification of $\mathcal{S}(A)$, in general, if the underlying topology is the weak-* topology. For example, if there is a stratification of $\mathcal{S}(A)$ whose underlying topology is the weak-* topology and the pure state space $\overline{\mathcal{P}(A)}$ is a stratum, necessarily such a stratum must be a finite-dimensional manifold. This follows from the observation that $\overline{\mathcal{P}(A)}$ is compact and Hausdorff in the weak-* topology, and from the following easy lemma.

\begin{lemma}
Any compact Hausdorff Banach manifold is finite-dimensional.
\end{lemma}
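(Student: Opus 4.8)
The plan is to reduce the statement to the classical theorem of F.~Riesz characterizing finite-dimensional normed spaces: a normed vector space is locally compact if and only if it is finite-dimensional (equivalently, its closed unit ball is compact). The entire argument then hinges on transporting local compactness from the manifold to its modeling Banach space.

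First I would recall that a compact Hausdorff space is automatically locally compact, so that every point admits a compact neighbourhood. Let $M$ be our Banach manifold and let $V$ be the Banach space on which it is locally modeled. Fixing any point $p\in M$ together with a chart $\phi:N_p\to U$, where $N_p$ is an open neighbourhood of $p$ in $M$ and $U\subseteq V$ is open, local compactness of $M$ at $p$ transfers through the homeomorphism $\phi$: it produces a compact set with non-empty interior inside $U$, hence inside $V$. Since this can be carried out at every point, it suffices to treat one chart.

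Next I would exploit the homogeneity of a topological vector space. Because every translation $v\mapsto v+w$ is a homeomorphism of $V$, the existence of a single point of $V$ possessing a compact neighbourhood forces every point to have one; that is, $V$ is a locally compact normed space. Applying Riesz's theorem then yields $\dim V<\infty$, and therefore $M$ is, near each of its points, locally homeomorphic to an open subset of a finite-dimensional space, i.e.~a finite-dimensional manifold.

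The step requiring the most care is the transfer of local compactness: one must check that a compact neighbourhood of $p$ in $M$ maps, under $\phi$, to a compact subset of $V$ whose interior (in $V$) is non-empty, so that $V$ is genuinely locally compact and not merely seen to contain some compact set. This is immediate once one observes that $\phi$ is a homeomorphism onto an open subset of $V$, hence carries interior points to interior points and preserves compactness; but it is precisely here that the \emph{Banach-manifold} hypothesis, rather than a purely topological one, enters the argument.
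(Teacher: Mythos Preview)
Your proposal is correct and follows essentially the same route as the paper's proof: push compactness through a chart to obtain a compact subset of the model Banach space $V$ with non-empty interior, then invoke Riesz's theorem (the paper phrases this as ``an infinite-dimensional normed vector space has no non-empty open subset with compact closure,'' which is an equivalent formulation). The only cosmetic difference is that the paper argues by contradiction and works directly with a closed ball inside $\Phi(U)$, whereas you speak of a compact neighbourhood of $p$ and add the homogeneity step explicitly; neither change is substantive.
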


\begin{proof}
Let $X$ be a compact Hausdorff topological space and assume, by contraddiction, that there exists a homeomorphism $\Phi:U\to\Phi(U)$ from an open non-empty subset $U$ of $X$ and an open non-empty subset of some Banach space $V$, equipped with norm topology. Now, $\Phi(U)$ contains an open ball, and then a closed ball $\overline{B}$ of smaller radius.
Since $\overline{B}$ is homeomorphic, via $\Phi^{-1}$, to a closed (hence compact) subset of $X$, it is it-self compact.
On the other hand, an infinite-dimensional normed vector space has no non-empty open subset with compact closure.
\end{proof}

It is worth mentioning that smooth Banach manifolds (but also stratified spaces) embed fully faithfully into the category of diffeological spaces \cite{IZ}, and on any subset of a diffeological space there is a natural subspace diffeology. Thus, $\mathcal{S}(A)$ has a natural diffeology induced by the Banach diffeology of $A^*$ \cite[Ex.~72]{IZ}. The underlying topology, however, is still the norm topology.


\begin{thebibliography}{9}

\bibitem{AM}
A. Abbondandolo and P. Majer, \textit{Infinite-dimensional Grassmannians}, J. Operator Theory 61 (2009), 19--62.

\bibitem{BZ}
I. Bengtsson and K. Zyczkowski, \textit{Geometry of Quantum States}, Cambridge Univ. Press, 2006.

\bibitem{CIJM}
F.M. Ciaglia, A. Ibort, J. Jost and G. Marmo, \textit{Manifolds of classical probability distributions and quantum density operators in infinite dimensions}, Information Geometry 2 (2019) 231--271.

\bibitem{CCIMV}
D. Chruscinski, F.M. Ciaglia, A. Ibort, G. Marmo and F. Ventriglia, \textit{Stratified manifold of quantum states, actions of the complex special linear group}, Annals of Physics 400 (2019) 221--245.

\bibitem{Con94}
A. Connes, \textit{Noncommutative geometry}, Academic Press, 1994.

\bibitem{DM10}
F.~D'Andrea and P.~Martinetti, \textit{A view on optimal transport from noncommutative geometry}, SIGMA 6 (2010), 057.

\bibitem{Dav96}
K.R. Davidson, \textit{C*-algebras by example}, Fields Institute Monographs vol. 6, AMS, 1996.

\bibitem{DR92}
J. Dittmann and G. Rudolph, \textit{On a connection governing parallel transport along $2\times 2$ density matrices}, J. Geom. Phys.
10 (1992), 93--106.

\bibitem{GMP}
M. Goresky and R. MacPherson, \textit{Stratified Morse Theory}, Springer-Verlag, 1988.

\bibitem{GKM}
J. Grabowski, M. Ku\'s and G. Marmo, \textit{Geometry of quantum systems: density states and entanglement}, J. Phys. A 38 (2005), 10217.

\bibitem{GVF}
J.M. Gracia-Bond{\'\i}a, J.C. Varilly and H. Figueroa, \textit{Elements of noncommutative geometry}, Birkh{\"a}user, 2001.

\bibitem{Hor}
R.A. Horn and C.R. Johnson, \textit{Matrix analysis}, 2nd ed., Cambridge Univ. Press, 2012.

\bibitem{IZ}
P. Iglesias-Zemmour, \textit{Diffeology}, Mathematical Surveys and Monographs 185, AMS, 2012.

\bibitem{Kad}
R. Kadison and J.R. Ringrose, \textit{Fundamentals of the Theory of Operator Algebras, Vol.~I}, AMS, 1983.

\bibitem{Kes}
S. Kesavan, \textit{Functional Analysis}, Hindustan Book Agency, 2009.

\bibitem{Kir}
F. Kirwan and J. Wolf, \textit{An introduction to intersection homology theory}, 2nd ed., Chapman \& Hall/CRC, 2006.

\bibitem{Lat}
F. Latremoliere, \textit{Quantum metric spaces and the Gromov–Hausdorff propinquity}, Contemp. Math. 676 (2016), 47--133.

\bibitem{Lee}
J.M. Lee, \textit{Introduction to topological manifolds}, 2nd ed., Springer, 2011.

\bibitem{Lott}
J. Lott, \textit{Some Geometric Calculations on Wasserstein Space}, Commun. Math. Phys. 277 (2008), 423--437.

\bibitem{Manivel} 
L. Manivel,  \textit{ Symmetric functions, Schubert polynomial and degeneracy loci},
SMF/AMS Texts and Monographs.  6(3). Providence, RI: American Mathematical Society (AMS). vii, 167 p. (2001).

\bibitem{Mar}
P. Martinetti, \textit{Distances en g\'eom\'etrie non commutative}, PhD thesis, arXiv:math-ph/0112038.

\bibitem{Mur}
G. Murphy, \textit{C*-Algebras and Operator Theory}, Elsevier, 1990.

\bibitem{Ped}
G.K. Pedersen, \textit{C*-Algebras and their Automorphism Groups}, Academic Press, 1979.

\bibitem{Pfl}
M.J. Pflaum, \textit{Analytic and Geometric Study of Stratified Spaces}, Lect. Notes in Math. 1768, Springer, 2001.

\bibitem{Rie}
M.A. Rieffel, \textit{Metrics on State Spaces}, Doc. Math. 4 (1999) 559--600.

\bibitem{Shi97}
M. Shiota, \textit{Geometry of Subanalytic and Semialgebraic Sets}, Progress in Math. vol. 150, Springer, 1997.

\bibitem{Sni13}
J. {\'S}niatycki, \textit{Differential geometry of singular spaces and reduction of symmetry}, New Math. Monographs 23, Cambridge University Press, 2013.

\end{thebibliography}
\end{document}